\documentclass[12pt]{article}
\usepackage{amsmath,amssymb,amsthm,euscript,amsfonts,amscd}

\oddsidemargin=1.46cm \topmargin=-1cm \textheight=220mm
\textwidth=135mm

\newtheorem{definition}{Definition}
\newtheorem{theorem}{Theorem}
\newtheorem{lemma}{Lemma}
\newtheorem{prop}{Proposition}
\newtheorem{corollary}{Corollary}

\newcommand{\p}[2]{\langle {#1},\ {#2} \rangle}
\newcommand{\m}[2]{\ensuremath{({#1}\times {#2})}}
\newcommand{\s}[2]{\ensuremath{\mathbb R^{#1}_{#2}}}
\newcommand{\es}{\EuScript }
\newcommand{\x}{\ensuremath{\EuScript{X}}}
\newcommand{\y}{\ensuremath{\EuScript{Y}}}
\newcommand{\pr}[2]{\ensuremath{\mathrm{proj}_{\mathcal{#1}} \EuScript {#2} }}
\newcommand{\prj}[2]{\ensuremath{\mathrm{proj}_{#1} \EuScript {#2} }}
\newcommand{\e}{\ensuremath{\mathrm{E}\,}}
\newcommand{\var}{\ensuremath{\mathop{\mathrm{Var}}}}
\newcommand{\cov}[2]{\ensuremath{\mathrm{Cov}({#1},{#2})}}

\numberwithin{equation}{section}

\newcommand{\BT}{\mathbf{T}}
\newcommand{\BR}{\mathbf{R}}


\newcommand{\Mr}{\mathbb R}

\newcommand{\ML}{\mathcal L}

\newcommand{\al}{\alpha}
\newcommand{\bs}{\bfseries}
\newcommand{\be}{\beta}
\newcommand{\ga}{\gamma}

\newcommand{\la}{\lambda}

\newcommand{\OL}{\overline}

\newcommand{\SL}{\sum\limits}

\DeclareMathOperator{\tr}{tr}

 \DeclareMathOperator{\proj}{proj}

\begin{document}

\author{Tyurin, Y. N.}
\title{\large{\bf MULTIVARIATE STATISTICAL ANALYSIS: A GEOMETRIC
PERSPECTIVE}}
\maketitle

\tableofcontents

\newpage
\section*{Introduction}
\addcontentsline{toc}{section}{Introduction}

Linear statistical analysis, and the least squares method
specifically, achieved their modern complete form in the language of
linear algebra, that is in the language of geometry.  In this article
we will show that {\it multivariate} linear statistical analysis in
the language of geometry can be stated just as beautifully and
clearly.  In order to do this, the standard methods of linear algebra
must be expanded.  The first part of this article introduces this
generalization of linear algebra.  The second part introduces the
theory of multivariate statistical analysis in the first part's
language.  We believe that until now multivariate statistical
analysis, though explained in dozens of textbooks, has not had
adequate forms of expression.

Multivariate observations are the observations of several random
quantities in one random experiment.  We shall further record
multivariate observations as columns.  We commonly provide
multivariate observations with indices.  In the simple case, natural
numbers serve as the indices.  (This can be the numbers of
observations in the order they were recorded).  For independent evenly
distributed observations this is a fitting way to organize
information.  If the distributions of observation depend on one or
more factors, the values or combinations of values of these factors
can serve as indices.  Commonly the levels of factors are numbered.
In that case the index is the set of numbers.  So, in a two-factor
scheme (classification by two traits) pairs of natural numbers serve
as indices.

We shall call the set of observations, provided with indices and so
organized, an {\it array}.

For theoretical analysis the linear numeration of data is most
convenient.  Further we will be holding to this system.  When
analyzing examples we will return, if needed, to the natural indexing
of data.

In univariate statistical analysis the numeration of data allows
recording as rows.  In the multivariate case the entirety of the
enumerated data (that is arrays) can also be examined as a row of
columns.  In many cases (but not always) such an array can be treated
as a matrix.

Arrays of one form naturally form a vector space under the operation
of addition and multiplication by numbers.  For the purposes of
statistical analysis this vector space is given a scalar product.  In
one dimensional analysis, if the observations are independent and have
equivalent dispersions, then the most fitting scalar product is the
euclidean product.  In more detail: let the observations have an index
$\alpha$; let arrays $\mathbf{T}_X$ and $\mathbf{T}_Y$ be composed of
the one-dimensional elements $X_\alpha$, $Y_\alpha$.  Then the
euclidean scalar product of arrays $\mathbf{T}_X$ and $\mathbf{T}_Y$
is
\begin{equation}
    \p{\mathbf{T}_X}{\mathbf{T}_Y} = \sum_\alpha X_\alpha Y_\alpha,
\end{equation}
where the index of summation goes through all possible values.
We shall record multivariate observations as columns.
In the multivariate case, the elements $X_\alpha, Y_\alpha$ are columns.
For arrays composed of columns, let us accept the following definition
of the scalar product of arrays $\mathbf{T}_X$ and $\mathbf{T}_Y$:
\begin{equation}\label{ast}
    \p{\mathbf{T}_X}{\mathbf{T}_Y} = \sum_\alpha X_\alpha Y_\alpha^T.
\end{equation}

The scalar product (\ref{ast}) is a square matrix.
Therefore, for arrays composed of columns, square matrices of the
corresponding dimensions must play the role of scalars.
With the help of the scalar product (\ref{ast}) and its consequences,
this article develops a theory of multivariate statistical
analysis, parallel to existing well-known univariate theory.

\section{Modules of Arrays \\ Over a Ring of Matrices}

\subsection{Space of Arrays}

In the introduction we agreed to hold to a linear order of indexation
for simplicity's sake.  However, all the introduced theorems need only
trivial changes to apply to arrays with a different indexation.

Let us consider a $p$-dimensional array with $n$ elements,
\begin{equation}\label{(1.1)}
\BT := \{X_i\mid i = \OL{1, n}\},
\end{equation}
where $X_1, X_2, \ldots, X_n$ are $p$-dimensional vector-columns.
Arrays of this nature form a linear space with addition and
multiplication by numbers.

{\bs 1. Addition:}
$$
\{X_i\mid i = \OL{1, n}\} + \{Y_i\mid i = \OL{1, n}\} = \{X_i +
Y_i\mid i = \OL{1, n}\}.
$$

{\bs 2. Multiplication by numbers:} let $\la$ be a number; then
$$
\la\{X_i\mid i = \OL{1, n}\} = \{\la X_i\mid i = \OL{1, n}\}.
$$

In addition, we will be examining the element-by-element
multiplication of arrays by square matrices of the appropriate
dimensions.

{\bs 3. Left Multiplication by a Matrix:} let $K$ be a square
matrix of dimensions $p\times p$. Suppose
\begin{equation}\label{(1.2)}
K\{X_i\mid i = \OL{1, n}\} = \{KX_i\mid i = \OL{1, n}\}.
\end{equation}
Note that the multiplication of an array by a number can be examined
as a special case of multiplication by a square matrix.
Specifically: multiplication by the number $\la$ is multiplication by
the matrix $\la I$, where $I$ is the identity matrix of dimensions
$p \times p$.

{\bs 4. Right Multiplication by matrices: } let $Q = \|q_{ij}\mid i =
\OL{1, n}, j = \OL{1, n}\|$ --- a square $n$ by $n$ matrix.
Let us define the right multiplication of array $\BT$ \eqref{(1.1)}
by matrix $Q$ as
\begin{equation}\label{(1.3)}
\{X_i\mid i = \OL{1, n}\} Q = \{\SL_{j=1}^n X_j q_{ij}\mid i = \OL{1, n}\}.
\end{equation}
It is clear that the product $\BT Q$ is defined by the common
matrix multiplication method of ``row by column'' with the difference
that elements of a row of $\BT$ (array $\BT$) are not
numbers but columns $X_1, \ldots, X_n$.

{\bs 5.} Let us define the {\bs inner product} in array space.
For it's properties we shall call it the scalar product (or, generalized\
scalar product).
In more detail: let
$$
\BT = \{X_i\mid i = \OL{1, n}\},\quad \BR = \{Y_i\mid i = \OL{1, n}\}.
$$
\begin{definition}\label{scalar-def}
  The Scalar (generalized scalar) product of arrays
  $\BT$ and $\BR$ is defined as
  \begin{equation}\label{(1.4)}
    \p {\BT}{\BR} = \SL_{i=1}^n X_i Y_i^T.
  \end{equation}
\end{definition}
The result of the product is a square $p$ by $p$ matrix.
The scalar product is not commutative:
$$
\p {\BR}{\BT} = \p {\BT}{\BR}^T.
$$

{\bs 6.} The \textbf{Scalar square} of array
\begin{equation}\label{(1.5)}
\p {\BT}{\BT} = \SL_{i=1}^n X_i X_i^T.
\end{equation}
is a symmetric and non-negatively defined $\m pp$ matrix.
For the representation of the scalar square, we shall use the
traditional symbol of absolute value:
$\p {\BT}{\BT} =
|\BT|^2$.
In our case,
$|\BT|$ 
is the so-called matrix module. \cite{Horn} 

{\bs 7. The Properties of the scalar product} in array spaces
are similar to the properties of the traditional scalar product
in euclidean vector spaces. If
$\BT_1,
\BT_2, \BT_3$
are arrays in general form, then
\begin{align*}
&\p{\BT_1+\BT_2}{\BT_3} = \p{\BT_1}{\BT_3} + \p{\BT_2}{\BT_3};\\
&\p{K \BT_1}{\BT_2} = K \p{\BT_1}{\BT_2}
\text{ where $K$ is a square $\m{p}{p}$ matrix};\\
&\p {\BT_1}{\BT_1} \succcurlyeq 0
\text{ in the sense of the comparison of square symmetrical matrices;}\\
&\p {\BT_1}{\BT_1}=0
\mbox{ iff } \BT_1=0.
\end{align*}

{\bs 8.} We say that array $\BT$ is {\bs orthogonal} to
array $\BR$, if
$$
\p {\BT}{\BR} = 0.
$$
Note that if
$\p {\BT}{\BR} = 0$,
then also
$\p
{\BR}{\BT} = 0$.
Therefore the property of orthogonality of arrays is reciprocal.
The orthogonality of arrays $\BT$ and $\BR$
shall be denoted as $\BT\perp \BR$.

{\bs 9.} Notice a Pythagorean theorem: if arrays $\BT$ and
$\BR$ are orthogonal, then
\begin{equation}\label{(1.6)}
\p {\mathbf{T+R}}{\mathbf{T+R}}=\p {\BT}{\BT}+\p {\BR}{\BR}.
\end{equation}
We note again that the result of a scalar product of two
arrays is a $\m{p}{p}$ matrix, therefore in array spaces square
matrices of corresponding dimensions should play the role of scalars.
In particular, left multiplication by a $\m{p}{p}$ matrix shall be
understood as multiplication by a scalar, and array $k\BT$
shall be understood as proportional to array $\BT$.

Together with arrays of the form \eqref{(1.1)} we shall consider
one-to-one corresponding matrices
\begin{equation}\label{(1.7)}
\es X = ||X_1, X_2, \ldots, X_n||.
\end{equation}
Matrix \eqref{(1.7)} is a matrix with $p$ rows and $n$ columns.

{\bs Notation.} Matrices with $p$ rows and $n$ columns shall be called
$\m{p}{n}$ matrices.  The set of $\m{p}{n}$ matrices we shall call
$\Mr^p_n$.  Matrices of dimensions $\m{p}{1}$ we shall call
$p$-columns, or simply columns.  The set of $p$-columns we represent
as $\Mr^p_1$.  Matrices $\m{1}{n}$ we shall call $n$-rows, or simply
rows.  The set of $n$-rows we represent as $\Mr^1_n$.

Many operations with arrays can be carried out in their matrix forms.
For instance, the addition of arrays is equivalent to the addition of
their corresponding matrices; left multiplication by a square
$\m{p}{p}$ matrix $k$ is equivalent to the matrix product $k \es X$;
right multiplication by matrix $Q$ is equivalent to the matrix product
$\es XQ$; the scalar product of arrays
$$
\BT_X = \{X_i\mid i = \OL{1, n}\},\quad
\BT_Y = \{Y_i\mid i = \OL{1, n}\}
$$
is equal to the product of their equivalent matrices $\es X$ and $\es Y$:
\begin{equation}\label{(1.8)}
\p {\BT_X}{\BT_Y} = \es X\es Y^T.
\end{equation}
We show, for instance, that array $\BT Q$ corresponds to matrix
$\es XQ$.
Here $\BT$ is the arbitrary array of form
\eqref{(1.1)} and $\es X$
is the corresponding $\m{p}{n}$ matrix
\eqref{(1.7)}.
Let
$Q = \{q_{\al\be}\mid \al, \be = \OL{1, n}\}$
be a $\m{n}{n}$ matrix (with numerical elements $q_{\al\be}$).
\begin{prop}
Matrix $\es XQ$ corresponds to array $\BT Q$.
\end{prop}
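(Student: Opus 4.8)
The plan is to exploit the array--matrix correspondence \eqref{(1.7)} at the level of individual columns. By construction $\es X=\|X_1,\ldots,X_n\|$ is the matrix whose $i$-th column is precisely the $i$-th element $X_i$ of the array $\BT$; hence an array and a matrix correspond if and only if, for every $i=\OL{1,n}$, the $i$-th element of the array equals the $i$-th column of the matrix. I would therefore reduce the proposition to a single identity: for each $i$, the $i$-th element of the array $\BT Q$ equals the $i$-th column of the matrix $\es X Q$.

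First I would spell out the left-hand side. By the row-by-column description of right multiplication accompanying \eqref{(1.3)}, the $i$-th element of $\BT Q$ is the $p$-column obtained by pairing the row $X_1,\ldots,X_n$ against the $i$-th column of $Q$, that is $\SL_{j=1}^n X_j q_{ji}$.

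Next I would compute the right-hand side from the ordinary rule for matrix multiplication. Denoting by $(X_c)_r$ the $r$-th coordinate of $X_c$, so that $(\es X)_{rc}=(X_c)_r$, the $(r,i)$ entry of the $\m{p}{n}$ matrix $\es X Q$ is $\SL_{c=1}^n (X_c)_r\,q_{ci}$, which is the $r$-th coordinate of the column $\SL_{c=1}^n X_c q_{ci}$. Thus the $i$-th column of $\es X Q$ is $\SL_{c=1}^n X_c q_{ci}$, and after renaming the summation index this is exactly the $i$-th element of $\BT Q$ found above. Since this holds for all $i$, the matrix $\es X Q$ corresponds to the array $\BT Q$.

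The computation itself is routine; the only step that genuinely demands attention --- and the sole obstacle --- is the index bookkeeping. One must keep straight which subscript of $Q$ is contracted: in both the array product \eqref{(1.3)} and the matrix product the summation should run over the first subscript of $q$ with the second subscript held at the output position $i$. Once the row-by-column reading of \eqref{(1.3)} is aligned with the matrix convention in this way, the two expressions for the $i$-th column become literally identical and no further work is needed.
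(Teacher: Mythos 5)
Your proof is correct and follows essentially the same route as the paper's: write out the $i$-th column of $\es X Q$ and the $i$-th element of $\BT Q$ coordinate by coordinate and observe that they coincide. You are also right that the index bookkeeping is the only delicate point, and your resolution is the sound one: the literal formula \eqref{(1.3)} contracts the \emph{second} subscript of $q$ (giving $\SL_{j=1}^n X_j q_{ij}$), which agrees with the $i$-th column $\SL_{j=1}^n X_j q_{ji}$ of $\es X Q$ only for symmetric $Q$, so the proposition holds precisely under your reading (contraction over the first subscript, as the ``row by column'' description demands) --- which is in fact what the paper's own proof tacitly assumes when it declares the entries of \eqref{(1.9)} and \eqref{(1.10)} equal.
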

\begin{proof}
Elements of array $\BT$, being columns of matrix $\es X$, must be
represented in detailed notation.
Let
$$
X_j = (x_{1j}, x_{2j}, \ldots, x_{pj})^T,\ j = \OL{1, n}.
$$
In this notation,
\begin{equation}\label{(1.9)}
\es XQ = \|\SL_{j=1}^nx_{ij}q_{jk}\mid i = \OL{1, p}, k = \OL{1,
n}\|.
\end{equation}

The array
$$
\BT_Y = \{Y_k\mid k = \OL{1, n}\},
$$
corresponds to matrix $\es XQ$ where
$$
Y_k = (y_{1k}, y_{2k}, \ldots, y_{pk})^T,
$$
and
$$
y_{ik} = \SL_{j=1}^nx_{ij}q_{jk},
$$
by \eqref{(1.9)}.
Array $\BT Q$, by definition \eqref{(1.3)}, is equal to
$$
\BT Q = \{X_i\mid i = \OL{1, n}\}Q =
\{\SL_{j=1}^nX_jq_{kj}\mid k = \OL{1, n}\} = \{Z_k\mid k = \OL{1,
n}\},
$$
where $p$-row
\begin{equation}\label{(1.10)}
\begin{split}
Z_k &= \SL_{j=1}^nX_jq_{kj} = \SL_{j=1}^n(x_{1j}, \ldots,
x_{pj})^Tq_{kj} =\\ &= \left(\SL_{j=1}^nx_{1j}q_{kj},
\SL_{j=1}^nx_{2j}q_{kj}, \ldots, \SL_{j=1}^nx_{pj}q_{kj}\right)^T.
\end{split}
\end{equation}
Comparing expressions \eqref{(1.9)} and \eqref{(1.10)},
we see the equality of their elements.
\end{proof}

Thus in a tensor product $\s pn \otimes \s 1n $ we introduced the
structure of a module over the ring of square matrices supplied with
an inner product, which we called a scalar product.

\subsection{Linear Transformations}
Many concepts of classical linear algebra transfer to array space
almost automatically, with the natural expansion of the field of
scalars to the ring of square matrices.  For instance, the
transformation $f(\cdot)$ of array space \eqref{(1.1)} onto itself is
called {\it linear} if for any array $\BT_1$ and $\BT_2$ and for any
$\m{p}{p}$ matrix $k_1$ and $k_2$
\begin{equation}\label{(2.1)}
f(K_1\BT_1 + K_2\BT_2) = K_1f(\BT_1) + K_2f(\BT_2).
\end{equation}
Linear transformations in array space are performed by right
multiplication by square matrices.  Let $Q$ be an arbitrary $\m{n}{n}$
matrix, $\BT$ be an arbitrary array \eqref{(1.1)}.  That
transformation
$$
f(\BT) = \BT Q
$$
is linear in the sense of \eqref{(2.1)}, directly follows from the
definition \eqref{(1.3)}.  That there are no other linear
transformations follows from their absence even in the case $p = 1$.
(As we know, all linear transformations in vector spaces of rows are
performed by right multiplication by square $\m{n}{n}$ matrices.)

Note that the matrix form \eqref{(1.7)} of representing an array is
fitting also for the representation of linear transformations:
matrix $\es XQ$ (the product of matrices $\es X$ and $Q$) coincides
with the matrix form of an array \eqref{(1.3)}
$$
\BT Q = \{X_i\mid i = \OL{1, n}\}Q = \{\SL_{j=1}^nX_jq_{ij}\mid i = \OL{1,
n}\}.
$$

We shall call a linear transformation of array space onto itself
{\it orthogonal} if this transformation preserves the scalar
product. It means that for any arrays $\BT$ and $\BR$
$$
\p{\BT Q}{\BR Q} =\p{\BT}{\BR}.
$$
It is easy to see that orthogonal transformations are performed by
right multiplication by orthogonal matrices.
Indeed,
\begin{multline*}
\p{\BT Q}{\BR Q} =
\SL_{i=1}^n\left(\SL_{j=1}^nX_jq_{ij}\right)\left(\SL_{l=1}^nY_lq_{il}\right)^T
=\\
=\SL_{j=1}^n\SL_{q=1}^nX_jY_l^T\left(\SL_{i=1}^nq_{ij}q_{il}\right)
= \SL_{j=1}^nX_jY_j^T,
\end{multline*}
since matrix $Q$ is orthogonal and therefore
$$
\SL_{i=1}^nq_{ij}q_{il} = \delta_{jl}\quad \text{(Kronecker
symbol).}
$$

\subsection{Generating Bases and Coordinates}\label{Sec-basis}

Let $\al \in \Mr^p_1, x \in \Mr^1_n, \al x \in \Mr^p_n$. Here $\al
x$ denotes the product of matrices $\al$ and $x$. The matrices of
form $\al x$ plays a special role in array spaces.


Let $n$-rows $e_1, e_2, \ldots, e_n \in \Mr^1_n$ form the basis
of the space $\Mr^1_n$.
Let $\al_1, \al_2, \ldots, \al_n \in \Mr^p_1$ be arbitrary $p$-columns.
Let us consider $\m{p}{n}$-matrices $\al_1 e_1, \al_2 e_2, \ldots,
\al_n e_n$.

\begin{theorem}\label{th1}
Any $\m{p}{n}$-matrix $\es X$ \eqref{(1.7)} can be represented as
\begin{equation}\label{(3.1)}
\es X = \SL_{i=1}^n\al_ie_i
\end{equation}
for some choice of $\al_1, \al_2, \ldots, \al_n \in \Mr^p_1$
uniquely.
\end{theorem}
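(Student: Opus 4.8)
The plan is to reduce the statement to the invertibility of a single $\m{n}{n}$ matrix, after which existence and uniqueness fall out together. First I would assemble the given data into two matrices: let $A = \|\al_1, \al_2, \ldots, \al_n\|$ be the $\m{p}{n}$ matrix whose columns are the sought $p$-columns, and let $E$ be the $\m{n}{n}$ matrix whose rows are the basis $n$-rows $e_1, e_2, \ldots, e_n$. The crux is the identity
\begin{equation*}
\SL_{i=1}^n \al_i e_i = A E,
\end{equation*}
which converts the array representation \eqref{(3.1)} into an ordinary matrix equation.

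To verify this identity I would compute entrywise. Writing $\al_i = (a_{1i}, \ldots, a_{pi})^T$ and $e_i = (e_{i1}, \ldots, e_{in})$, each product $\al_i e_i$ is the rank-one $\m{p}{n}$ matrix with $(k,j)$-entry $a_{ki} e_{ij}$; summing over $i$ gives $\SL_{i=1}^n a_{ki} e_{ij}$, which is exactly the $(k,j)$-entry of $A E$ (the summation index $i$ runs over columns of $A$ and rows of $E$). Hence the representation $\es X = \SL_{i=1}^n \al_i e_i$ holds if and only if $\es X = A E$.

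It then remains only to solve $\es X = A E$ for $A$. Since $e_1, \ldots, e_n$ form a basis of $\Mr^1_n$, the matrix $E$ is nonsingular, so $A = \es X E^{-1}$ is the unique $\m{p}{n}$ matrix satisfying the equation; reading off its columns yields the $p$-columns $\al_1, \ldots, \al_n$, uniquely determined. This establishes existence (take $A = \es X E^{-1}$) and uniqueness in one stroke. The only non-formal ingredient is the invertibility of $E$: in the scalar case $p = 1$ this is the classical statement that the coordinates of a row with respect to a basis exist and are unique, and the general case is simply that same fact applied to all $p$ rows simultaneously through $E^{-1}$. I therefore expect the single genuine step to be the entrywise identification $\SL_{i=1}^n \al_i e_i = A E$, with everything afterward being standard linear algebra.
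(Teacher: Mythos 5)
Your proposal is correct and follows essentially the same route as the paper: both reduce \eqref{(3.1)} to the matrix equation $\es X = AE$ via the entrywise identification $\SL_{i=1}^n\al_ie_i = AE$, and then invoke the invertibility of $E$ (since $e_1,\ldots,e_n$ is a basis of $\Mr^1_n$) to get $A = \es XE^{-1}$ as the unique solution. No gaps.
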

\begin{proof}
Let us define $\m{n}{n}$-matrix $E$ formed by $n$-rows
$e_1, e_2, \ldots, e_n$. Let us also introduce a $\m{p}{n}$-matrix
$A$ formed by $p$-columns
$\al_1, \al_2, \ldots, \al_n$.
With matrices $A$ and $E$ the sum \eqref{(3.1)} can be represented as
$$
\SL_{i=1}^n\al_ie_i = AE.
$$

Here are some calculations to confirm that assertion.
Let
$\al_i =
(\al_{1i}, \al_{2i}, \ldots, \al_{pi})^T, \ e_i = (e_{i1}, e_{i2},
\ldots, e_{in})$.
$$
\SL_{i=1}^n\al_ie_i = \SL_{i=1}^n(\al_{1i},
\al_{2i}, \ldots, \al_{pi})^T(e_{i1}, e_{i2}, \ldots, e_{in}).
$$
The element at $(k,l)$-position of each product $\al_i e_i,
i=1,\dots, n$,  is in essence
$\al_{ki}e_{il}$. Their total sum, which is the element of matrix
$\sum_{i=1}^n \al_i e_i$, is
$\sum_{i=1}^n\al_{ki}e_{il}$.

The element at $(k,l)$-position  of matrix $AE$  (calculated by the
row by column rule) is
$$
\SL_{i=1}^n\al_{ki}e_{il}.
$$
The calculated results are equal.

The theorem shall be proven if we show that the equation
\begin{equation}\label{(3.2)}
\es X = AE
\end{equation}
has a unique solution relative to the $\m{p}{n}$-matrix $A$.
Since the $\m{n}{n}$-matrix $E$ is invertible, the solution is
obvious:
\begin{equation}\label{(3.3)}
A = \es XE^{-1}.
\end{equation}
\end{proof}

The theorem allows us to say that the basis of $\Mr^1_n$ generates
the space $\Mr^p_n$ (using the above method).
Thus, the bases in $\Mr^1_n$ shall be called {\it generating bases}
in relation to $\Mr^p_n$.
The $p$-columns
$\al_1, \al_2, \ldots, \al_n$ from \eqref{(3.1)}  can be understood
as  the coordinates of $\es X$ in the generating basis $e_1, e_2,
\ldots, e_n$.
For the canonical basis of the space $\Mr^1_n$ (where $e_i$ is an
$n$-row, in which the $i$th element is one, and the others are zero)
coordinates $\es X$ relative to this basis are $p$-columns
$X_1, \ldots, X_n \in \Mr^p_n$, which form the matrix $\es X$.

The coordinates of the $\m pn$-matrix $\es X$ in two different
generating bases are connected by a linear transformation.
For example, let $n$-rows $f_1, \ldots, f_n$ form the basis in
$\Mr^1_n$.
Let $F$ be an $\m nn$-matrix composed of these $n$-rows.
By Theorem~\ref{th1} there exists a unique set of $p$-columns
$\be_1, \be_2, \ldots, \be_n$ that are coordinates of $\es X$
relative to the generating basis $f_1, \ldots, f_n$.
Matrices $B = ||\be_1, \ldots, \be_n||$ and $F$ are connected to the
$\m pn$-matrix $\es X$ by the equivalence
\begin{equation}\label{(3.4)}
\es X = BF.
\end{equation}
With  \eqref{(3.2)} this gives
$$
BF = AE.
$$
Therefore,
$$
B = AEF^{-1},\quad A = BFE^{-1}.
$$

\begin{corollary}
If the generating bases $e_1, \ldots, e_n$ and $f_1, \ldots, f_n$
are orthogonal, then the transformation of the coordinates of an
array in one basis to the coordinates of it in another is performed
through multiplication by an orthogonal matrix.
\end{corollary}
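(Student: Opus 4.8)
The plan is to identify the change-of-coordinates matrix explicitly and then verify in one line that it is orthogonal, after which the result follows from the description of orthogonal transformations given in Section~1.2. First I would make precise what it means for a generating basis to be \emph{orthogonal}: the $n$-rows $e_1,\ldots,e_n\in\Mr^1_n$ being orthonormal means $e_ie_j^T=\delta_{ij}$, and since these rows are exactly the rows of $E$, the $(i,j)$ entry of $EE^T$ is $e_ie_j^T=\delta_{ij}$, so $EE^T=I$; thus $E$ is an orthogonal matrix with $E^{-1}=E^T$. The same applies to $F$, giving $F^{-1}=F^T$ and $F^TF=I$.

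Next I would simply read off the transformation of coordinates from the relation already derived immediately above the corollary, namely $B = AEF^{-1}$. Hence passing from the coordinates $A$ of $\es X$ in the basis $e_1,\ldots,e_n$ to its coordinates $B$ in the basis $f_1,\ldots,f_n$ is right multiplication of the coordinate matrix by the single $\m nn$ matrix $Q := EF^{-1}$. By the discussion of orthogonal transformations in array space, it therefore suffices to show that $Q$ is an orthogonal matrix.

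The verification is a short computation. Substituting $F^{-1}=F^T$ gives $Q=EF^T$, whence
$$
QQ^T = EF^T(EF^T)^T = EF^TFE^T = E(F^TF)E^T = EE^T = I,
$$
using $F^TF=I$ and $EE^T=I$. Thus $Q$ is orthogonal, and the coordinate change $A\mapsto B=AQ$ is performed by right multiplication by an orthogonal matrix, as claimed. Conceptually this is just the statement that the product of two orthogonal matrices ($E$ and $F^{-1}$) is again orthogonal.

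I do not anticipate a genuine obstacle; the only point requiring care is the reading of \emph{orthogonal basis}. The argument needs the $n$-rows to be orthonormal, so that $E$ and $F$ are honestly orthogonal matrices. Were the bases merely pairwise orthogonal without unit scaling, $E$ and $F$ would differ from orthogonal matrices by invertible diagonal rescalings, the factors $F^TF$ and $EE^T$ above would be diagonal rather than $I$, and $Q$ would fail to be orthogonal in general. With the orthonormality convention in force, the computation closes immediately.
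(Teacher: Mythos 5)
Your proposal is correct and follows exactly the route the paper intends: it reads the coordinate change off the relation $B=AEF^{-1}$ derived immediately before the corollary, and observes that $E$ and $F$ are orthogonal matrices (since the paper's convention for an orthogonal basis, used later in the text, is $e_ie_j^T=\delta_{ij}$), so $Q=EF^T$ satisfies $QQ^T=I$. The paper leaves this verification implicit, and your remark about orthogonal versus orthonormal correctly identifies the one convention on which the statement depends.
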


Let us consider an arbitrary orthogonal basis $e_1,\ldots, e_n$ in
$\Mr^p_n$.
For arbitrary $\m pn$-matrices $\es X$ and $\es Y$ we have the
decompositions of \eqref{(3.1)} with respect to this basis:
$$
\es X = \SL_{i=1}^n\al_ie_i,\quad \es Y = \SL_{i=1}^n\ga_ie_i.
$$
We can  express the scalar product of $\es X$ and $\es Y$
through their coordinates. It is easy to see that
\begin{equation}\label{(3.5)}
\p{\BT_X}{\BT_Y} = \es X\es Y^T= \SL_{i=1}^n\al_i\ga_i^T.
\end{equation}
\begin{corollary}
In an orthogonal basis, the scalar product of two arrays is equal to
the sum of the paired product of the coordinates.
\end{corollary}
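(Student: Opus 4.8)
The plan is to establish formula \eqref{(3.5)}, of which the corollary is only the verbal restatement. I would start from the matrix form of the scalar product \eqref{(1.8)}, namely $\p{\BT_X}{\BT_Y} = \es X\es Y^T$, and insert the two expansions $\es X = \SL_{i=1}^n\al_ie_i$ and $\es Y = \SL_{j=1}^n\ga_je_j$ with respect to the common orthogonal basis $e_1,\ldots,e_n$. Using $\left(\SL_{j=1}^n\ga_je_j\right)^T = \SL_{j=1}^ne_j^T\ga_j^T$ and distributing, the scalar product turns into the double sum
$$
\es X\es Y^T = \SL_{i=1}^n\SL_{j=1}^n\al_i\,e_ie_j^T\,\ga_j^T.
$$

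The decisive observation is that every factor $e_ie_j^T$ is a product of an $n$-row by an $n$-column, hence a $1\times 1$ matrix, that is, an ordinary number equal to the Euclidean inner product of the rows $e_i$ and $e_j$ in $\Mr^1_n$. Since the basis is orthogonal, the $\m nn$-matrix $E$ with rows $e_1,\ldots,e_n$ is an orthogonal matrix, so $e_ie_j^T = \delta_{ij}$, exactly as in the computation for orthogonal transformations carried out above. Being a scalar, $e_ie_j^T$ commutes with the $p$-column $\al_i$ and the $p$-row $\ga_j^T$; consequently all off-diagonal terms ($i\neq j$) drop out and the diagonal terms collapse the double sum to $\SL_{i=1}^n\al_i\ga_i^T$, which is the assertion.

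The one point demanding attention --- and the main, though mild, obstacle --- is the bookkeeping of transposes and of the noncommutativity of matrix multiplication: here the coordinates $\al_i,\ga_i$ are $p$-columns while the $e_i$ are $n$-rows, so the $p\times p$ contribution must be kept in the order $\al_i(\,\cdot\,)\ga_j^T$ and the intermediate dimensions ($p\times1$, $1\times n$, $n\times1$, $1\times p$) verified before the scalar $e_ie_j^T$ is contracted. Once this dimensional bookkeeping is in place, the orthogonality relation $e_ie_j^T=\delta_{ij}$ does all the remaining work, and no further calculation is required.
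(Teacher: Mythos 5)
Your proof is correct and follows essentially the same route as the paper: expand both arrays in the orthogonal basis, distribute into the double sum $\SL_{i=1}^n\SL_{j=1}^n\al_ie_ie_j^T\ga_j^T$, and use $e_ie_j^T=\delta_{ij}$ to collapse it to $\SL_{i=1}^n\al_i\ga_i^T$. Your added care with the dimensional bookkeeping (that $e_ie_j^T$ is a genuine scalar sandwiched between a $p$-column and a $p$-row) is a sound elaboration of a step the paper leaves implicit, but it is not a different argument.
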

\begin{proof}
Indeed,
$$
\es X\es Y^T= \p{\SL_{i=1}^n\al_ie_i}{\SL_{j=1}^n\ga_je_j} =
\SL_{i=1}^n\SL_{j=1}^n\al_ie_ie_j^T\ga_j^T =
\SL_{i=1}^n\al_i\ga_i^T,
$$
since for the orthogonal basis $e_i e_j^T = \delta_{ij}$.
\end{proof}

Therefore the scalar square of $\BT_X$ equals
$$
|\BT_X|^2 = \es X \es X^T = \SL_{i=1}^n\al_i\al_i^T.
$$
We can conclude from here that the squared length of an array is
equal to the sum of its squared coordinates in an orthogonal basis,
as for the squared euclidean length of a vector.

\subsection{Submodules}
We define a Submodule in array space \eqref{(1.1)} (or the space of
corresponding matrices \eqref{(1.7)}) to be a set which is closed
under linear operations: addition and multiplication by scalars.
Remember that multiplication by scalars means left multiplication by
$\m pp$-matrices.  For clarity, we shall discuss arrays in their
matrix forms in future.

\begin{definition}\label{def2}
The set $\ML \subset \Mr^p_n$ we shall define to be the submodule
of space $\Mr^p_n$, if for any
$\es X_1, \es X_2 \in \ML$
\begin{equation}\label{(4.1)}
K_1\es X_1 + K_2\es X_2 \in \ML
\end{equation}
with arbitrary $\m pp$-matrices $K_1, K_2$.
\end{definition}

\begin{theorem}\label{th2}
Any submodule $\ML$,\ $\ML \subset \Mr^p_n$, is formed by some
linearly independent system of $n$-rows.
The number of elements in this system is uniquely determined by
$\ML$. This number may be called the dimension of the linear
subspace~$\ML$.
\end{theorem}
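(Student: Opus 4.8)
The plan is to identify the submodule $\ML$ with an ordinary linear subspace of the row space $\Mr^1_n$, and then to read off both a generating system of $n$-rows and the uniqueness of its size from that subspace. To $\ML$ I attach the set $W\subset\Mr^1_n$ of all real linear combinations of rows of matrices belonging to $\ML$. This is a genuine linear subspace of the finite-dimensional space $\Mr^1_n$, so I fix a basis $e_1,\ldots,e_m$ of it, with $m=\dim W$; these are, by construction, linearly independent $n$-rows. I then claim that $\ML$ coincides with the submodule formed by $e_1,\ldots,e_m$, i.e. with the set of all $\sum_{i=1}^m\al_i e_i$, $\al_i\in\Mr^p_1$. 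Since a matrix has the form $\sum_{i=1}^m\al_i e_i$ exactly when each of its rows lies in $\mathrm{span}(e_1,\ldots,e_m)=W$, this claim is the same as the set equality $\ML=\{\es X:\text{every row of }\es X\text{ lies in }W\}$.

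One inclusion is free: by the definition of $W$, every row of every matrix in $\ML$ lies in $W$. The reverse inclusion is the main obstacle, and it is where I must use that $\ML$ is closed under left multiplication by arbitrary $\m pp$ matrices. Write $E_{lk}$ for the $\m pp$ matrix with a single $1$ in position $(l,k)$, and $\eps_l$ for the $p$-column with a single $1$ in position $l$. For any $\es X\in\ML$ the product $E_{lk}\es X$ again lies in $\ML$ and equals $\eps_l\,r$, where $r$ is the $k$-th row of $\es X$; hence every rank-one matrix $\eps_l\,r$ built from a row $r$ of a member of $\ML$ lies in $\ML$. Because $W$ is spanned by such rows and $\ML$ is closed under real linear combinations (take the coefficient matrices to be scalar multiples of $I$), it follows that $\eps_l e_j\in\ML$ for all $l$ and all $j$. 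Taking left combinations, $\al e_j=\sum_l\al_l(\eps_l e_j)\in\ML$ for any column $\al$, and summing over $j$ gives $\sum_{j}\al_j e_j\in\ML$. Thus every matrix with all rows in $W$ lies in $\ML$, which proves the reverse inclusion and hence the representation $\es X=\sum_{i=1}^m\al_i e_i$ for every $\es X\in\ML$.

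For the uniqueness of the number of generators I would note that $W$ is intrinsic to $\ML$. Indeed, if $\ML$ is formed by any linearly independent $n$-rows $g_1,\ldots,g_r$, then each $g_i$ is the nonzero row of $\eps_1 g_i\in\ML$, while every row of $\sum_i\al_i g_i$ is a combination of the $g_i$; hence the span of all rows of members of $\ML$ is exactly $\mathrm{span}(g_1,\ldots,g_r)$, which therefore equals $W$. A linearly independent spanning set of $W$ is a basis, so $r=\dim W=m$. Consequently the size of any generating system of linearly independent $n$-rows equals $\dim W$ and depends only on $\ML$, which justifies calling this number the dimension of $\ML$.
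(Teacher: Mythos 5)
Your proof is correct, and it takes a noticeably different (and more complete) route than the paper's. The paper proceeds greedily: it picks some $\es X\in\ML$, extracts a maximal linearly independent subsystem of its rows, observes that these generate the submodule $\ML(\es X)=\{K\es X\}$, and if $\ML(\es X)\ne\ML$ it adjoins the rows of a new element $\es Z\in\ML\setminus\ML(\es X)$ and repeats until the process terminates. You instead define the intrinsic subspace $W\subset\Mr^1_n$ spanned by \emph{all} rows of \emph{all} members of $\ML$ at the outset, and prove the two-sided set equality $\ML=\{\es X:\text{every row of }\es X\text{ lies in }W\}$ directly. Both arguments rest on the same mechanism --- left multiplication by the elementary $\m pp$ matrices $E_{lk}$ isolates individual rows, so membership in $\ML$ is a condition on the row span --- but the paper leaves this mechanism implicit (it is what justifies the claimed description of $\ML(\es X)$), whereas you make it explicit and use it to close the harder inclusion. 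Your version also buys a genuine proof of the second assertion: the paper essentially asserts that the number of generators is determined by $\ML$, while you show that any linearly independent generating system must be a basis of the intrinsic subspace $W$, so its cardinality is forced to be $\dim W$. The paper's iterative construction is perhaps closer in spirit to the usual "extend to a maximal independent set" argument and avoids naming $W$, but your global formulation is cleaner and is exactly the correspondence between submodules of $\Mr^p_n$ and subspaces of $\Mr^1_n$ that the paper records afterwards as a corollary.
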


\begin{proof}
Let $\es X \in \ML$.
The set of $\m pn$-matrices of the form $K\es X$ (where $K$ is an
arbitrary $\m pp$-matrix) forms a submodule.
Let us label it as $\ML(\es X)$
Let $x_1, \ldots, x_p$ be $n$-rows of the $\m pn$-matrix $\es X$.
Let us choose from among these $n$-rows a maximal linear independent
subsystem, such as $y_1, \ldots, y_k$.
It is obvious that
$$
\ML(\es X) = \{\es Y\mid \es Y = \SL_{i=1}^k\be_iy_i,\ \be_1,
\ldots, \be_k \in \Mr^p_1\}.
$$
If $\ML(\es X) = \ML$, then $y_1, \ldots, y_k$ form a generating
basis for $\ML \subset \Mr^p_n$.
If $\ML(\es X) \ne \ML$, then let us find in $\ML$ an element, say
$\es Z$, that does not belong to $\ML(\es X)$.
Let us expand the system $y_1, \ldots, y_k$ with $n$-rows $z_1, \ldots,\
 z_p$ of $\m pn$-matrix $\es Z$. Then we find in this set of $n$-rows the maximal
linearly independent subsystem, and repeat.
At some point the process ends.
\end{proof}

\begin{corollary}
Any submodule $\ML \subset \Mr^p_n$ can be expressed as the
sum of one-dimensional submodules $\ML_i \subset \Mr^p_n$:
\begin{equation}\label{(4.2)}
\ML = \ML_1\oplus\ML_2\oplus\ldots\oplus\ML_l,
\end{equation}
where
$$
\ML_i = \{\es X\mid \es X = \al y_i, \al \in \Mr^p_1\}
$$
for some $y_i \in \Mr^1_n$.
The number $l$ is the same in any representation \eqref{(4.2)} of
$\ML$.
This number can be called the dimension of subspace $\ML$: $l =
\dim\ML$.
\end{corollary}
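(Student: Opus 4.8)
The plan is to read off the decomposition directly from Theorem~\ref{th2}. That theorem furnishes a linearly independent system of $n$-rows $y_1, \ldots, y_l \in \Mr^1_n$ forming a generating basis of $\ML$, with $l = \dim\ML$. For each index $i$ set $\ML_i = \{\al y_i \mid \al \in \Mr^p_1\}$; each $\ML_i$ is closed under addition and under left multiplication by $\m pp$-matrices, hence is a submodule, and it is one-dimensional because $y_i \ne 0$. Since $y_i \in \ML$ we also have $\ML_i \subseteq \ML$. It then remains to verify the two assertions: that $\ML$ is the \emph{direct} sum of the $\ML_i$, and that $l$ is independent of the chosen decomposition.

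First I would establish the set equality $\ML = \ML_1 + \ML_2 + \cdots + \ML_l$. The inclusion $\supseteq$ is immediate from $\ML_i \subseteq \ML$. For $\subseteq$, Theorem~\ref{th2} gives every $\es X \in \ML$ an expansion $\es X = \SL_{i=1}^l \be_i y_i$ with $\be_i \in \Mr^p_1$; each summand $\be_i y_i$ lies in $\ML_i$, so $\es X$ lies in the sum.

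The step I expect to carry the real content is the directness of the sum, which is where the coefficients being $p$-columns rather than numbers must be handled with care. It suffices to show that $\SL_{i=1}^l \al_i y_i = 0$, with $\al_i \in \Mr^p_1$, forces every $\al_i = 0$. Writing $\al_i = (\al_{1i}, \ldots, \al_{pi})^T$ and $y_i = (y_{i1}, \ldots, y_{in})$, the $(k,m)$-entry of $\SL_i \al_i y_i$ is $\SL_{i=1}^l \al_{ki} y_{im}$. Fixing a row index $k$, the vanishing of this entry for every $m$ says exactly that $\SL_{i=1}^l \al_{ki} y_i = 0$ as an $n$-row, and the ordinary linear independence of $y_1, \ldots, y_l$ over $\Mr$ then yields $\al_{ki} = 0$ for all $i$. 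Letting $k$ range over $1, \ldots, p$ gives $\al_i = 0$ for every $i$. Thus the module-theoretic independence of the $y_i$ (with column coefficients) reduces, component by component, to their ordinary scalar independence, so the representation $\es X = \SL_i \be_i y_i$ is unique; this is precisely the directness $\ML = \ML_1 \oplus \cdots \oplus \ML_l$.

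Finally, the invariance of $l$ follows from Theorem~\ref{th2}. In any representation \eqref{(4.2)}, the generators $y_1, \ldots, y_l$ of the summands both generate $\ML$ and, by the directness just proved, are linearly independent; they therefore constitute a linearly independent generating system of $\ML$. Since Theorem~\ref{th2} asserts that the number of elements in such a system is determined by $\ML$ alone, the count $l$ is the same for every decomposition and equals $\dim\ML$.
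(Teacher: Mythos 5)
Your proposal is correct and follows the route the paper intends: the corollary is stated without a separate proof precisely because it is meant to be read off from Theorem~\ref{th2}'s linearly independent generating system, which is exactly what you do. Your componentwise reduction of the directness claim (fixing a row index $k$ and invoking ordinary linear independence of the $y_i$ over $\Mr$) is the right way to fill in the one detail the paper leaves implicit, and your appeal to Theorem~\ref{th2} for the invariance of $l$ matches the paper's framing.
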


{\bs Note.}
One can choose an orthogonal linearly independent system of $n$-rows
that generates $\ML$.
For proof, it is sufficient to note that the generating system
can be transformed into an orthogonal one by the process of
orthogonalization.
\smallskip

Theorem~\ref{th2} establishes the one-to-one correspondence between
linear subspaces of vector space $\Mr^1_n$ and the submodules
of the matrix space $\Mr^p_n$.
Let us state this as

\begin{corollary}
Each linear subspace $L$ in the space of $n$-rows $\Mr^1_n$
corresponds to some submodule $\ML$ in the space of $\m
pn$-matrices $\Mr^p_n$.
The dimensions of the linear subspace $L$ and the submodule $\ML$ coincide.
\end{corollary}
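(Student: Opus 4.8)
The plan is to make the correspondence announced before the corollary completely explicit and then check that it preserves dimension. First I would define the map $\Phi$ sending a linear subspace $L \subset \Mr^1_n$ to a submodule of $\Mr^p_n$. Choosing any basis $y_1, \ldots, y_l$ of $L$, set
$$
\Phi(L) = \ML := \left\{\es X \mid \es X = \SL_{i=1}^l \al_i y_i,\ \al_1, \ldots, \al_l \in \Mr^p_1\right\}.
$$
To see this is independent of the chosen basis, I would give the basis-free description of $\ML$ as the set of all $\m pn$-matrices each of whose $p$ rows lies in $L$. Writing $\al_i = (\al_{1i}, \ldots, \al_{pi})^T$, the $k$th row of $\SL_i \al_i y_i$ is $\SL_i \al_{ki} y_i \in L$; conversely, a matrix all of whose rows lie in $L$ is recovered by expanding each row in the basis $y_1, \ldots, y_l$. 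This characterization refers only to $L$, so $\Phi$ is well defined.

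Next I would verify the two assertions of the corollary. That $\ML$ is a submodule in the sense of Definition~\ref{def2} is immediate from the row description: left multiplication by a $\m pp$-matrix $K$ replaces each row of $\es X$ by a linear combination of its rows, which again lies in $L$, and addition combines rows componentwise, so $K_1\es X_1 + K_2\es X_2$ keeps every row in $L$. For the dimension statement I would note that the basis $y_1, \ldots, y_l$ of $L$ is precisely a generating basis of $\ML$ in the sense of Subsection~\ref{Sec-basis}: every $\es X \in \ML$ is uniquely $\SL_i \al_i y_i$, since the $\al_{ki}$ are the unique coordinates of the $k$th row of $\es X$ in the basis $y_1, \ldots, y_l$ of $L$. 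By the corollary to Theorem~\ref{th2} the number of elements of a generating basis is an invariant of the submodule, so $\dim\ML = l = \dim L$.

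Finally, for the one-to-one correspondence I would exhibit the inverse of $\Phi$. Given a submodule $\ML \subset \Mr^p_n$, let $\Psi(\ML) \subset \Mr^1_n$ be the linear span of all rows of all matrices in $\ML$; by Theorem~\ref{th2} this span coincides with the span of the finite linearly independent system of $n$-rows furnished by that theorem. The row description of $\Phi(L)$ gives $\Psi(\Phi(L)) = L$ directly, while Theorem~\ref{th2} shows every submodule equals the submodule generated by those $n$-rows, i.e. $\Phi(\Psi(\ML)) = \ML$. Hence $\Phi$ and $\Psi$ are mutually inverse, which is the claimed bijection.

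The technical work here is light, because Theorem~\ref{th2} already supplies the generating system of $n$-rows and its corollary supplies the dimension invariance. The one point I would be careful about is the equivalence of the two descriptions of $\ML$ — the span description $\SL_i \al_i y_i$ and the ``all rows lie in $L$'' description — since it is exactly this equivalence that makes $\Phi$ well defined and guarantees that $\Psi$ inverts it. Everything else reduces to uniqueness of coordinates in a generating basis, already established in Theorem~\ref{th1} and Subsection~\ref{Sec-basis}.
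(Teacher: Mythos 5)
Your proof is correct and follows the same route the paper intends: the paper offers no written proof at all, simply presenting the corollary as a restatement of what Theorem~\ref{th2} and its corollary establish, and your argument is exactly the natural unpacking of that claim (the span construction, the generating basis of $\ML$ inherited from a basis of $L$, and dimension invariance from the corollary to Theorem~\ref{th2}). The one piece you add beyond what the paper even gestures at --- the basis-free ``all rows lie in $L$'' description and the explicit inverse map $\Psi$ --- is a genuine improvement in rigor, since it is what actually makes the correspondence well defined and bijective rather than merely surjective.
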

In this manner, the space $\Mr^p_n$ (and the corresponding array
space) and the space $\Mr^1_n$ have an equal ``supply'' of linear
subspaces and submodules.
This leads to significant consequences for multivariate
statistical analysis.

\begin{definition}
An orthogonal compliment of the submodule $\ML$ with
respect to the whole space is said to be
\begin{equation}\label{(4.3)}
\ML^{\perp} = \{\es X\mid \es X \in \Mr^p_n,\ \p{\es X}{\es Y} =
0,\ \forall\ \es Y \in \ML\}.
\end{equation}
\end{definition}s
It is easy to see that $\ML^{\perp}$ is a submodule and that
$$
\ML\oplus\ML^{\perp} = \Mr^p_n,\quad \dim\ML^{\perp} = n -
\dim\ML.
$$

\subsection{Projections onto Submodules}\label{sec1.5}

Let us consider array space \eqref{(1.1)} with the introduced scalar
product \eqref{(1.4)}.
Let $\ML$ be the submodule \eqref{(4.1)}.
Let us call {\it the projection of array $\BT$ onto a linear
subspace} $\ML$ the point of $\ML$ that is closest to $\BT$ in the
sense of comparing scalar squares \eqref{(1.5)}.

Let us say it in details.
Let array $\BR$ pass through the set $\ML$.
We shall call the point $\BR^0 \in \ML$ {\it closest to $\BT$} if
for any $\BR \in \ML$
$$
\p{\BT - \BR^0}{\BT - \BR^0}
\preccurlyeq \p{\BT - \BR}{\BT - \BR}.
$$
Note that $\p{\BT - \BR}{\BT - \BR}$ is the function of $\BR$ with
values in the set of $\m pp$-matrices.
The existence of a minimal element in the set of matrices (generated
by $\BR \in \ML$) is not obvious and is not provided naturally.
So the existence of $\proj_{\ML}\BT$ needs to be proved.
We state this result in the following theorem.

\begin{theorem}\label{th3}
The projection of $\BT$ onto $\ML$ exists, is unique, and has
the expected (euclidean) properties.

\begin{enumerate}
\item For any array $\BR \in \ML$,
$$
|\BT - \BR|^2 \succcurlyeq |\BT -
\proj_{\ML}\BT|^2,
$$
with equality iff $\BR = \proj_{\ML}\BT$;
\item $(\BT - \proj_{\ML}\mathbf{T })\perp \ML$;
\item $\proj_{\ML}(K_1\BT_1 + K_2\BT_2) = K_1\proj_{\ML}\BT_1 +
K_2\proj_{\ML}\BT_2$.
\end{enumerate}
\end{theorem}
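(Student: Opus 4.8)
The plan is to convert the matrix-order minimization into an explicit coordinate computation in an orthogonal generating basis. First I would use the Note following Theorem~\ref{th2} to fix a generating system $y_1, \ldots, y_l$ of $\ML$ with $y_i y_j^T = \delta_{ij}$ (so $l = \dim\ML$), and extend it by orthogonalization to an orthogonal basis $y_1, \ldots, y_n$ of the whole space $\Mr^1_n$. By Theorem~\ref{th1}, the matrix $\es X$ representing $\BT$ has a unique expansion $\es X = \SL_{i=1}^n \al_i y_i$ with $p$-columns $\al_i \in \Mr^p_1$, and every $\BR \in \ML$ is represented by $\SL_{i=1}^l \be_i y_i$ for some $\be_i \in \Mr^p_1$. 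I would then propose as the projection the array whose matrix is $\SL_{i=1}^l \al_i y_i$, the truncation of $\es X$ to its $\ML$-coordinates, which plainly lies in $\ML$.

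Next I would expand $|\BT - \BR|^2$ through the orthogonal-basis formula \eqref{(3.5)}. Since the coordinates of $\BT - \BR$ are $\al_i - \be_i$ for $i \le l$ and $\al_i$ for $i > l$, one obtains $|\BT - \BR|^2 = \SL_{i=1}^l (\al_i - \be_i)(\al_i - \be_i)^T + \SL_{i=l+1}^n \al_i \al_i^T$. Every summand $(\al_i - \be_i)(\al_i - \be_i)^T$ is symmetric and non-negative definite, so the first sum is $\succcurlyeq 0$; hence $|\BT - \BR|^2 \succcurlyeq \SL_{i=l+1}^n \al_i \al_i^T = |\BT - \proj_{\ML}\BT|^2$, which is item~1 and establishes existence. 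Moreover equality in the order forces the first sum to be the zero matrix; as a sum of non-negative definite matrices it vanishes only when each $(\al_i - \be_i)(\al_i - \be_i)^T = 0$, i.e. $\be_i = \al_i$ for all $i \le l$, i.e. $\BR = \proj_{\ML}\BT$. This delivers uniqueness at the same time.

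For item~2 the residual $\BT - \proj_{\ML}\BT$ is represented by $\SL_{i=l+1}^n \al_i y_i$, and its scalar product with an arbitrary $\SL_{j=1}^l \be_j y_j \in \ML$ equals $\SL_{i=l+1}^n \SL_{j=1}^l \al_i \be_j^T (y_i y_j^T) = 0$, since $y_i y_j^T = 0$ whenever $i \ne j$; thus the residual is orthogonal to all of $\ML$. For item~3 I would note that left multiplication by $K$ and addition act on the coordinate columns by $\al_i \mapsto K\al_i$ and coordinatewise addition, so $K_1\BT_1 + K_2\BT_2$ has coordinates $K_1\al_i^{(1)} + K_2\al_i^{(2)}$; since the projection simply deletes the coordinates with $i > l$, it commutes with these operations, which is exactly the asserted linearity.

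The conceptual obstacle, flagged in the statement itself, is that the order $\succcurlyeq$ on symmetric $\m pp$-matrices is only partial, so the matrix values $|\BT - \BR|^2$ need not admit a least element a priori. The orthogonal decomposition is what resolves this: it separates an $\BR$-independent term $\SL_{i=l+1}^n \al_i \al_i^T$ from a sum of non-negative definite matrices that the single choice $\be_i = \al_i$ drives to zero simultaneously. Because one $\BR$ minimizes every term at once, a least element (not merely a minimal one) exists, and this simultaneous vanishing is the crux that makes the theorem true despite the absence of a total order.
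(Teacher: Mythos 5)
Your proof is correct, but for the central existence--uniqueness claim it takes a genuinely different route from the paper's. The paper proves item~1 by reducing the matrix inequality to a family of scalar inequalities: it first shows (Lemma~\ref{lemma1}) that $\la^T\ML = L$ for every $p$-column $\la$, identifies the candidate projection as $\es X\Pi$ where $\Pi$ is the ordinary projection matrix onto the generating subspace $L\subset\Mr^1_n$, and then observes that
$\la^T\p{\es X-\es Y}{\es X-\es Y}\la=|\la^T\es X-\la^T\es Y|^2\ge|\la^T\es X-(\la^T\es X)\Pi|^2$
by the classical Euclidean projection property in $\Mr^1_n$ --- Roy's device, which the paper then reuses computationally in Section~\ref{sec1.6}. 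You instead expand everything in an orthonormal generating basis and exhibit $|\BT-\BR|^2$ as an $\BR$-independent term $\SL_{i=l+1}^n\al_i\al_i^T$ plus a sum of rank-one non-negative definite matrices $(\al_i-\be_i)(\al_i-\be_i)^T$ that the single choice $\be_i=\al_i$ annihilates simultaneously; this is a self-contained Pythagorean argument that never quotes the univariate projection theorem and makes fully explicit why a \emph{least} element (not merely a minimal one) exists in the partial order $\succcurlyeq$, which is the point the theorem statement flags as non-obvious. The two candidate projections coincide, since in your basis $\Pi$ acts exactly by deleting the coordinates with index greater than $l$. For items~2 and~3 the proofs essentially agree: the paper verifies orthogonality of the residual by the same orthogonal-basis computation you use, and derives linearity from the explicit formula $\proj_{\ML}\es X=\es X\Pi$, which is your coordinate-truncation observation in matrix form. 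What the paper's route buys is the practical recipe of computing $\proj_{\ML}\es X$ from the known scalar formula for $\proj_L$; what yours buys is independence from the scalar theory and a cleaner treatment of the equality case.
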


\begin{proof} 
Let $\es X \in \Mr^p_n$ be an arbitrary $\m pn$-matrix.
As was shown, any submodule $\ML \subset \Mr^p_n$ is
equivalent to a linear subspace $L$ in the space of $n$-rows, $L
\subset \Mr^1_n$.
Let $\Pi$ be a projection matrix onto $L$ in the space $\Mr^1_n$,
that is, for any $x \in \Mr^1_n$
$$
\proj_{\ML}x = x\Pi.
$$

To prove the theorem we need the following Lemma~\ref{lemma1} and
Theorem~\ref{th4}.

\begin{lemma}\label{lemma1}
Let $\ML \subset \Mr^p_n$ be a submodule in the  space of $\m
pn$-matrices, and let $L \subset \Mr^1_n$ be a linear subspace in
the space of $n$-rows which generates $\ML$.
Then for any $\la \in  \Mr^p_1$
$$
\la^T\ML = L.
$$
\end{lemma}

\begin{proof}[Proof of Lemma]
Let $r = \dim L, r \le n$.
Let us choose within $L$ the basis $e_1, \ldots, e_r$.
As we know, the subspace $\ML \in \Mr^p_n$ can be represented as
$$
\ML = \{\es Y\mid \es Y = \SL_{k=1}^r\al_ke_k, \al_1, \ldots,
\al_k \in \Mr^p_1\}.
$$
Let $\es Y \in \ML$, then for some
$\al_1, \ldots, \al_k \in \Mr^p_1$
$$
\es Y = \SL_{k=1}^r\al_ke_k.
$$
Therefore, under any $\la \in \Mr^p_1$
$$
\la^T\es Y = \SL_{k=1}^r(\la^T\al_k)e_k \in L,
$$
since $\la^T\al_1, \ldots, \la^T\al_r$ are numerical coefficients.
\end{proof}

\begin{theorem}\label{th4}
Let $\ML$ be a submodule in the space of $\m pn$-matrices.
Let $L$ be a linear subspace in the space of $n$-rows which generates
$\ML\subset \mathbb R^p_n$.
Let $\Pi$ be a projection $\m nn$-matrix onto $L$, that is, for any
$x \in \Mr^1_n$
$$
\proj_{L}x = x\Pi.
$$
Then for any $\es X \in \Mr^p_n$
$$
\proj_{\ML}\es X =\es X\Pi.
$$
\end{theorem}
\begin{proof}
We must show that for any $\es Y \in \ML$
\begin{equation}\label{(5.1)}
\p{\es X - \es Y}{\es X - \es Y} \succcurlyeq \p{\es X - \es X\Pi}
{\es X - \es X\Pi}
\end{equation}
with equality if and only if $\es Y = \es X\Pi$.
The inequality  between two symmetrical $\m pp$-matrices in
\eqref{(5.1)} means that for any
$\la \in \Mr^p_1$
$$
\la^T\p{\es X - \es Y}{\es X - \es Y}\la\, \ge \la^T\p{\es X - \es
X\Pi} {\es X - \es X\Pi}\la,
$$
thus
$$
|\la^T(\es X - \es Y)|^2 \ge |\la^T(\es X - \es X\Pi)|^2,
$$
thus
$$
|\la^T\es X - \la^T\es Y|^2 \ge |\la^T\es X - (\la^T\es X)\Pi|^2.
$$
As was noted above, the $n$-row $y = \la^T \es Y$ belongs to $L$
and $\la^T\es X\Pi =x\Pi$ is a projection of $\la^T\es X$ onto $L$.
Due to the properties of euclidean projection, we get for any $y \in
L$
$$
|x - y|^2 \ge |x - x\Pi|^2
$$
with equality if and only if
$y = x\Pi$. Thus, $\es X\Pi$ is the nearest point to $\es X$ in
$\ML$.
\end{proof}

Now we return to proving Theorem \ref{th3}. From Theorem
\ref{th4} we know $\es X\Pi$ is the unique projection of $\es X$
onto $\ML$. So, statement 1 of Theorem \ref{th3} is proven.

The explicit expression $\proj_{\ML}\es X=\es X\Pi$ confirms that
the operation of projection onto a submodule is a linear operation.
So, statement 3 of Theorem \ref{th3} is proven as well.

To complete the proof of Theorem \ref{th3} we need to show statement
2. Let $e_1,\ldots,e_r$ be an orthogonal basis of $L$ and
$e_{r+1},\ldots,e_n$ be an orthogonal basis of $L^\bot$.
Then, $e_1,\ldots,e_n$ is the orthogonal basis of $\mathbb R^1_n$.
In this orthogonal basis, if
$$
\es X = \SL_{i=1}^n\al_ie_i,
$$
then
$$
\es X - \es X\Pi = \SL_{i=r+1}^n\al_ie_i.
$$
Since $\es Y \in \ML$,
$$
\es Y = \SL_{i=1}^r\be_ie_i
$$
for some $\be_1, \ldots, \be_r \in \Mr^p_1$.
Therefore:
\begin{align*}
(\es X - \es X\Pi)\es Y^T &= \p{\SL_{i=r+1}^n\al_ie_i}
{\SL_{i=1}^r\be_ie_i} = \SL_{i=r+1}^n\SL_{j=1}^r \p{\al_ie_i}
{be_je_j} =\\
&= \SL_{i=r+1}^n\SL_{j=1}^r\al_ie_i(\be_je_j)^T =
\SL_{i=r+1}^n\SL_{j=1}^r\al_ie_ie_j^T\be_j^T = 0,
\end{align*}
since $e_i e_j^T = 0$ when $i \ne j$.
\end{proof}

\subsection{Matrix Least Squares Method}\label{sec1.6}

Calculating projections onto a submodule
$\ML \subset \Mr^p_n$ become easier if the form of projection onto
the linear subspace $L$ which generates $\ML$ is known.
By the lemma from Section \ref{sec1.5}, for any $\la \in \Mr^p_1$
\begin{equation}\label{(6.1)}
\la^T\proj_{\ML}\es X = \proj_L(\la^T\es X).
\end{equation}
Assume that for the right part of \eqref{(6.1)} we have an explicit
formula $y = \proj_L x$.
Then because of the linearity  this gives  us for
$\proj_{\ML}(\la^T\es X)$ an explicit formula $\la^T\es Y$.
Therefore
\begin{equation}\label{(6.2)}
\la^T\proj_{\ML}\es X = \la^T\es Y.
\end{equation}
So we get an explicit expression for
$\proj_{\ML}\es X$. One can say this is the calculation of
$\proj_{\ML}\es X$ by Roy's method. \cite{Roy} 

\bigskip\noindent {\bs
Example: calculating the arithmetic mean.
} Let $X_1, X_2,$ $\ldots, X_n \in \Mr^p_1$ be the set of
$p$-columns.
Let us consider the array $\BT = \{X_i\mid i=\OL{1,n}\}$ and
represent it in matrix form.
\begin{equation}\label{(6.3)}
\es X = \|X_1, X_2, \ldots, X_n\|.
\end{equation}
Our task is to find the array $\es Y$ with identical columns, i.e.,
an array of form
\begin{equation}\label{(6.4)}
\es Y = \|Y,Y,\dots,Y\|,\ Y \in \Mr^p_1,
\end{equation}
closest to \eqref{(6.3)}.
Arrays of form \eqref{(6.4)} produce a one-dimensional submodule.
We shall denote it by
$\ML,\ \ML \subset \Mr^p_n$. We have to find $\proj_{\ML}\es X$.
The submodule $\ML$ is generated by a one-dimensional linear subspace 
$L,L \subset \Mr^1_n$, spanned by $n$-row
$e = (1, 1, \ldots, 1)$.

Let $x$ be an arbitrary $n$-row, $x = (x_1, \ldots, x_n)$.
The form of projection of $x$ onto $L$ is well known:
$$
\proj_{L}x = (\OL{x}, \ldots, \OL{x}).
$$
Applying Roy's method to matrix $\es X$ \eqref{(6.3)} we get over
to $n$-row $x = \la^T\es X$, where $x_i=\la^T X_i,\ i=\OL{1,n}$.
It is then clear that
$$
\proj_Lx = (\la^T\OL{X}, \ldots, \la^T\OL{X}).
$$
Therefore,
\begin{equation}\label{(6.5)}
\proj_{\ML}\es X = (\OL{X}, \ldots, \OL{X}).
\end{equation}
Of course, this is not the only and not always the most efficient
method.
In this example, like in other cases, one can apply the matrix method
of least squares and find
\begin{equation}\label{(6.6)}
\hat{Y} = \arg\min\limits_{Y\in\Mr^p_1}\SL_{i=1}^n(X_i - Y)(X_i -
Y)^T.
\end{equation}

{\bs Solution.}
Let us transform the function in \eqref{(6.6)}: for any
$Y\in\mathbb R^p_1$
\begin{multline}\label{(6.7)}
\SL_{i=1}^n(X_i - Y)(X_i - Y)^T = \SL_{i=1}^n[(X_i - \OL{X}) +
(\OL{X} - Y)][(X_i - \OL{X}) + (\OL{X} - Y)]^T = \\ =
\SL_{i=1}^n(X_i - \OL{X})(X_i - \OL{X})^T + n\SL_{i=1}^n(\OL{X} -
Y)(\OL{X} - Y)^T = (1) + (2),
\end{multline}
since ``paired products'' \ turn to zero:
$$
\SL_{i=1}^n(X_i - \OL{X})(\OL{X} - Y)^T = 0,\qquad
\SL_{i=1}^n(\OL{X} - Y)(X_i - \OL{X})^T = 0.
$$
Now the  function \eqref{(6.7)} is a sum of two nonnegatively
defined matrices, and the first one does not  depend on  $Y$.
The minimum attains at $Y=\OL{X}$: at that point the nonnegatively
definite matrix (2) turns to zero.

The answer is an arithmetic mean, that is,
$$
\hat{Y} = \OL{X}. 
$$
Of course, it is well known.
It can be find by applying not the matrix but the ordinary method of
least squares:
$$
\hat{Y} = \arg\min\limits_{Y\in\Mr^p_1}\SL_{i=1}^n(X_i - Y)^T(X_i
- Y).
$$

The results of the matrix method similarly relate to the traditional
in the case of projection on other submodules $\ML \subset \Mr^p_n$.
The reason is simple: if an array $\es Y$ is the solution of a matrix
problem
$$
\SL_{i=1}^n(X_i - Z_i)(X_i - Z_i)^T = (\es X - \es Z)( \es X - \es
Z)^T \to \min\limits_{\es Z \in \ML},
$$
then $Y$ is a solution of the scalar problem as well,
$$
\tr\{\SL_{i=1}^n(X_i - Z_i)(X_i - Z_i)^T\} = \SL_{i=1}^n(X_i -
Z_i)^T(X_i - Z_i) \to \min\limits_{\es Z \in \ML}.
$$
Thus, for instance, in calculating the projection on submodulqes one
can use the traditional scalar method of least squares.  Both least
squares methods in linear models give us the same estimates of
parameters.  The necessity of matrix scalar products and the matrix
form of orthogonality, projection, submodules, etc becomes apparent in
testing linear hypothesis.  We shall relate this in the next section.


\section{Multivariate Linear Models}

\subsection{Arrays with Random Elements}
\label{p21}

Let us consider array \eqref{p21-11}, the elements of which are
$p$-dimensional random variables presented in the form of
$p$-columns.
\begin{equation}\label{p21-11}
\BT=\{X_i\mid i=\overline{1,n}\}.
\end{equation}
Remember that we treat such an array as a row composed of $p$-columns
under algebraic operations.  For arrays of form \eqref{p21-11} with
random elements, let us define mathematical expectation and
covariance.  The array
\begin{equation}\label{p21-12}
\e {\BT_X} = \{ \e {X_i} \mid i=\overline{1,n}\}
\end{equation}
is called the {\it mathematical expectation} of $\BT$.
We define the  covariance matrix of array \eqref{p21-11}
much like the covariance matrix of random vector.
Let
$$
t=(x_1,\dots,x_n)
$$
be an $n$-row composed of random variables $x_1,x_2,\dots,x_n$.  As we
know, the covariance matrix $\var t$ of the random vector $t$ is an
$\m n n$-matrix with elements
$$
\sigma_{ij}=\cov{x_i}{x_j},\quad
\text{where }
i,j=\overline{1,n}.
$$
Algebraically, with the help of matrix operations, the covariance matrix
of the random vector $t$ can be defined as
\begin{equation}\label{p21-13}
\var t=\e (t-\e t)^T(t-\e t).
\end{equation}
Following \eqref{p21-13}, we define the \textit{covariance array} of
random array \eqref{p21-11} as
\begin{equation}\label{p21-14}
\var \BT :=\e (\BT-\e \BT)^T(\BT-\e \BT)
=\{\cov{X_i}{X_j}\mid i,j=\overline{1,n}\}.
\end{equation}
Here $\cov{X_i}{X_j}$ is a covariance matrix of random column-vectors
$X_i$ and $X_j$,
\begin{equation}\label{p21-15}
\cov{X_i}{X_j}=\e(X_i-\e X_i)(X_j-\e X_j)^T.
\end{equation}
Note that we consider $\var \BT$ \eqref{p21-14} as a square
array of dimensions \m n n, the elements of which are \m pp- matrices
\eqref{p21-15}.

Let us consider the array $\BR$, obtained by the linear
transformation of array $\BT$ \eqref{p21-11}
\begin{equation}\label{p21-16}
\BR=\BT Q,
\end{equation}
where $Q$ is a \m nn-matrix.

It is clear that 
$$
\e \BR=(\e \BT) Q,
$$
\begin{equation}\label{p21-17}
\var \BR = \e [(\BT Q-\e \BT Q)^T(\BT Q-\e \BT Q)]=Q^T(\var
\BT)Q.
\end{equation}

In mathematical statistics, arrays with statistically independent
random elements are of especial interest when the covariance matrices
 of these elements are the same.
Let $\BT$ \eqref{p21-11} be an array such that
\begin{equation}\label{p21-18}
\cov{X_i}{X_j}=\delta_{ij}\Sigma,\quad  i,j=\overline{1,n}.
\end{equation}
Here $\Sigma$ is a nonnegatively defined \m pp-matrix and $\delta_{ij}$
is the  symbol of Kronecker.
Let us consider an orthogonal transformation of array $\BT$:
\begin{equation}\label{p21-19}
\BR=\BT C,
\end{equation}
where $C$ is an orthogonal \m nn-matrix.
The following lemma is fairly simple but important.

\begin{lemma}\label{lemma2}
\begin{equation}\label{p21-10}
\var \BR=\var \BT=\{\delta_{ij}\Sigma\mid
i,j=\overline{1,n}\}
\end{equation}
\end{lemma}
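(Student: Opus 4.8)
The plan is to reduce the statement to the transformation law for covariance arrays already established as equation~\eqref{p21-17}, and then to carry out one short index computation that exploits the orthogonality of $C$. The equality $\var\BT=\{\delta_{ij}\Sigma\mid i,j=\overline{1,n}\}$ is merely a restatement of the hypothesis~\eqref{p21-18}, so the only genuine content is $\var\BR=\var\BT$.

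First I would specialize~\eqref{p21-17} to the orthogonal matrix $Q=C$. Since~\eqref{p21-17} holds for an arbitrary $\m nn$-matrix, it gives at once
$$
\var\BR = C^T(\var\BT)C.
$$
Here the right-hand side is to be read as a product acting on the $n\times n$ index structure of the square array $\var\BT$, with each $\m pp$-block treated as a ``scalar'' in the sense of the module structure introduced earlier.

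Next I would compute the block at position $(k,l)$. Writing $C=\|c_{ij}\|$ and inserting the hypothesis $\cov{X_i}{X_j}=\delta_{ij}\Sigma$, the entries $c_{ik}$ are ordinary numbers and therefore commute past the $\m pp$-matrix $\Sigma$, so that
$$
[C^T(\var\BT)C]_{kl}
= \SL_{i=1}^n\SL_{j=1}^n c_{ik}\,(\delta_{ij}\Sigma)\,c_{jl}
= \left(\SL_{i=1}^n c_{ik}c_{il}\right)\Sigma.
$$
Finally I would invoke orthogonality: the relation $C^TC=I$ reads $\SL_{i=1}^n c_{ik}c_{il}=\delta_{kl}$, whence the $(k,l)$-block equals $\delta_{kl}\Sigma$. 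This is exactly the $(k,l)$-entry of $\{\delta_{ij}\Sigma\}$, giving $\var\BR=\{\delta_{kl}\Sigma\}=\var\BT$.

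I do not expect any serious obstacle here: the lemma is essentially an immediate corollary of~\eqref{p21-17}. The only point that requires care is bookkeeping---making sure the product $C^T(\var\BT)C$ is interpreted as a block multiplication on the $n$-fold index while the $\m pp$-matrix $\Sigma$ plays the role of a scalar, and checking that the numerical coefficients $c_{ik}$ legitimately commute with $\Sigma$. Once that is clear, orthogonality does all the work.
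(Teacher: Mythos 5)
Your proposal is correct and follows essentially the same route as the paper: both reduce the claim to the transformation law $\var\BR=C^T(\var\BT)C$ from \eqref{p21-17} and then use the orthogonality relation $C^TC=I$ to see that conjugation leaves the block-diagonal array $\{\delta_{ij}\Sigma\}$ unchanged. The paper states this in one line; you merely make the $(k,l)$-block computation explicit, which is a harmless elaboration.
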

This lemma generalizes for the multivariate case the well-known 
property of spherical normal distributions.

\begin{proof}
The proof of the lemma is straightforward.
To simplify the formulas, assume that $\e \BT=0$.
Then,
\eqref{p21-17},
\begin{multline*}
\e (\BT C)=\e [(\BT C)^T(\BT C)]=C^T(\var
\BT)C=\\= C^T\{\delta_{ij}\Sigma\mid
i,j=\overline{1,n}\}C=\{\delta_{ij}\Sigma\mid
i,j=\overline{1,n}\}.
\end{multline*}
\end{proof}

Earlier, while discussing generating bases and coordinates (Section
\ref{Sec-basis}), we established that the transformation from the
coordinates of array $\BT$ in an orthogonal basis to
coordinates of this array in another basis can be done through
multiplication by an orthogonal matrix.
Therefore if the coordinates of some array in one orthogonal
basis are not correlated and have a common covariance matrix, then
the coordinates of the given array hold these properties in any
orthogonal basis.
From the remark above and just established Lemma \ref{lemma2} follows
\begin{lemma}\label{lemma3}
If the coordinates of a random array in an orthogonal basis are
uncorrelated and have a common covariance matrix, then the
coordinates of this array are uncorrelated and have the same common
covariance in any orthogonal basis.
\end{lemma}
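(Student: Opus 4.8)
The plan is to reduce the statement directly to Lemma~\ref{lemma2} by recognizing that the two coordinate families are themselves arrays of the kind considered throughout, and that passing from one orthogonal basis to another is an orthogonal array transformation in the sense of \eqref{p21-19}. Thus the whole argument is a matter of setting up the correspondence correctly and then citing a result already proved.

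First I would fix notation. Let the random array be $\BT$, let $e_1,\ldots,e_n$ and $f_1,\ldots,f_n$ be two orthogonal generating bases of $\Mr^1_n$, and let $E$ and $F$ be the $\m nn$-matrices whose rows are these $n$-rows. By Theorem~\ref{th1} the coordinates of $\BT$ in the two bases are uniquely determined $p$-columns; collect them into the coordinate arrays $A=\{\al_i\mid i=\OL{1,n}\}$ and $B=\{\be_i\mid i=\OL{1,n}\}$, so that in matrix form $\es X=AE=BF$. The hypothesis that the $e$-coordinates are uncorrelated with a common covariance $\Sigma$ reads $\cov{\al_i}{\al_j}=\delta_{ij}\Sigma$, i.e. $\var A=\{\delta_{ij}\Sigma\mid i,j=\OL{1,n}\}$, which is exactly the premise \eqref{p21-18} of Lemma~\ref{lemma2}, now applied to $A$ rather than to $\BT$.

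Next I would invoke the change-of-coordinates computation from Section~\ref{Sec-basis}: from $AE=BF$ one obtains $B=A\,(EF^{-1})$, so the array $B$ is gotten from the array $A$ by right multiplication by the $\m nn$-matrix $C:=EF^{-1}$. Since both bases are orthogonal we have $EE^T=FF^T=I$, whence $C=EF^T$ is a product of orthogonal matrices and is itself orthogonal; this is precisely the content of the Corollary on coordinate changes between orthogonal bases in Section~\ref{Sec-basis}. Hence $B=AC$ with $C$ orthogonal, matching the form $\BR=\BT C$ required by Lemma~\ref{lemma2}.

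Finally I would apply Lemma~\ref{lemma2} with $A$ in the role of $\BT$ and $C$ in the role of the orthogonal matrix. Since $\var A=\{\delta_{ij}\Sigma\}$, the lemma yields $\var B=\var A=\{\delta_{ij}\Sigma\mid i,j=\OL{1,n}\}$, which says exactly that the $f$-coordinates $\be_1,\ldots,\be_n$ are uncorrelated and share the common covariance $\Sigma$, completing the proof. I do not expect any computational obstacle; the only point needing care is conceptual, namely to observe that the coordinate families $A$ and $B$ are again arrays of the same type (so that Lemma~\ref{lemma2} applies to them verbatim) and that the transformation between two orthogonal bases is genuinely orthogonal and not merely invertible, which is where the orthogonality of $E$ and $F$ is used.
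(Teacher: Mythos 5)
Your proposal is correct and follows essentially the same route as the paper: the paper likewise derives Lemma~\ref{lemma3} by combining the Corollary of Section~\ref{Sec-basis} (that the change of coordinates between two orthogonal generating bases is right multiplication by an orthogonal $\m nn$-matrix) with Lemma~\ref{lemma2} applied to the coordinate array. Your write-up merely makes explicit the identification $B=A(EF^{-1})$ with $EF^{-1}=EF^{T}$ orthogonal, which the paper leaves implicit.
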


This property is of great importance in studying linear statistical
models.

Finally, let us note that in introducing and discussing covariance arrays of
random arrays we have to work with the arrays themselves
\eqref{(1.1)} and not with the matrices \eqref{(1.7)} representing them.

\subsection{Linear Models and Linear Hypotheses}

\begin{definition}
One says that array $\BT$ \eqref{p21-11} with random elements obeys a
linear model if
\begin{itemize}
\item[a)]
for some given submodule $\ML$
\begin{equation}\label{p2-21}
\e\BT\in\ML;
\end{equation}
\item[b)]
elements $X_1, \ldots, X_n$ of array $\BT$ are independent and
identically distributed.
\end{itemize}
\end{definition}
If this is common for all $X_i$, with $i=\overline{1,n}$ a gaussian
distribution, then we say that array $\BT$ follows a \textit{linear
gaussian model}.
We will now study linear gaussian models.

We shall denote with $\Sigma$ the common covariance matrix for all $p$-columns.
The array $\e \BT$ and matrix $\Sigma$ are \textit{parameters of the
model}.
They are generally unknown; although, $\Sigma$ is assumed to be
nondegenerate.

For random arrays following the gaussian model, \textit{linear hypotheses}
are often discussed.
Within the framework of the linear model \eqref{p2-21} the linear hypothesis
holds the form:
\begin{equation}\label{p2-22}
\e \BT\in\ML_1,
\end{equation}
where $\ML_1$ is a given submodule, and $\mathcal{L}_1
\subset \mathcal{L}$.

Let us show that the linear models and linear hypotheses discussed
in multivariate statistical analysis have the structure of \eqref{p2-21}
and \eqref{p2-22}.
The main linear models are factor and regression.
For example, let us consider the one-way layout and regression
models of multivariate statistical analysis.

The \textbf{One-way layout model} is the simplest of the ``analysis of
variance'' models.  It is a shift problem of several (say, $m$) normal
samples with identical covariance matrices.  The array of observations
in this problem has to have double numeration:
\begin{equation}\label{p2-23}
\BT=\{X_{ij}\mid j=\overline{1,m},
i=\overline{1,n_j}\}.
\end{equation}
Here $m$ is the number of different levels of the factor, which
affects the expected values of the response. Here, $n_j$ is the number
of independently repeated observations of the response on the level
$j$ of the factor, $j=\overline{1,m}$.  Finally, multivariate
variables $X_{ij}$ are independent realizations of a $p$-dimensional
response, $X_{ij}\in\s p 1$.  Assume $N=n_1+\cdots+n_m$.  The main
assumption of the model is: $X_{ij}\sim N_p(a_j,\Sigma)$.

We shall linearly order the observations which constitute the array
\eqref{p2-23} and then represent \eqref{p2-23} as a \m pN-matrix.
\begin{equation}\label{p2-25}
\es{X}=\|X_{11},X_{12},\dots,X_{1n_1},X_{21},\dots,X_{2n_2},X_{m1},\dots
X_{mn_m}\|.
\end{equation}
Note that
\begin{equation}\label{p2-26}
\e \es X=\|\underbrace{a_1,\dots,a_1}_{n_1 \text{ times}},
\underbrace{a_2,\dots,a_2}_{n_2 \text{ times}},\dots
,\underbrace{a_m,\dots,a_m}_{n_m \text{ times}}\|.
\end{equation}
Let us introduce $N$-rows
\begin{equation}\label{p2-27}
\begin{split}
&e_1=(\underbrace{1,\dots,1}_{n_1 \text{ times}},0,\dots,0),\\
&e_2=(\underbrace{0,\dots,0}_{n_1 \text{ times}},\underbrace{1,\dots,1}_{n_2 \text{ times}},0,\dots,0),\\
&\dots\\
&e_m=(\underbrace{0,\dots,0}_{n_1 \text{times}},
\underbrace{0,\dots,0}_{n_2 \text{ times}},
\dots,\underbrace{1,\dots,1}_{n_m \text{ times}}).
\end{split}
\end{equation}
It is obvious that
$$
\e\es X=\sum_{i=1}^m a_i e_i.
$$
Therefore $\e\es X$ belongs to an $m$ dimensional submodule of
the space \s p N spanned by $n$-rows \eqref{p2-27}.

The hypothesis $H_0:a_1=a_2=\cdots =a_m$, with which one usually
begins the statistical analysis of $m$ samples, is obviously a linear
hypothqesis in the sense of \eqref{p2-22} $H_0:\e\es X\in\ML_1$, where
$\ML_1$ is a one dimensional linear subspace spanned by the single
$N$-row $e=e_1+\dots+e_m$.

\textbf{Multivariate Multiple Regression in matrix form} is
\begin{equation}\label{ex1}
\es Y=\es A\es X+\es E,
\end{equation}
where $\es Y=\|Y_1,Y_2,\dots,Y_n\|$.  Here $\es Y$ is theq observed \m
pn-matrix of $p$-dimensional response; $\es X$ is a given design \m
mn-matrix; $\es A$ is a \m pm-matrix of unknown regression
coefficients; $\es E=\|E_1,E_2,\dots,E_n\|$ is a \m pn-matrix composed
of independent $p$-variate random errors $E_1,E_2,\dots,E_n$.  In
gaussian models
$$
E_i\sim N_p(0,\Sigma),
$$
where \m pp matrix $\Sigma$ is assumed to be non-degenerate.
Generally $\Sigma$ is believed to be unknown.

Let $A_1,A_2,\ldots,A_m$ be the $p$-columns forming matrix $\es A$;
let $x_1,x_2,\ldots,x_m$ be $n$-rows, forming matrix $\es X$.
Then
\begin{equation}\label{ex2}
\es A\es X=\sum_{i=1}^m A_i x_i.
\end{equation}
The resulting expression~\eqref{ex2} shows that $\e\es Y=\es A\es X$
belongs to an $m$ dimensional submodule of the space \s pn,
generated by the linear system of $n$-rows
$x_1,x_2,\dots,x_m$.

\subsection{Sufficient Statistics and Best Unbiased Estimates}
\label{p23}

Let us consider a linear gaussian model \eqref{p2-21}
in matrix form
\begin{equation}\label{p2-31}
\es X=\es{M}+\es{E}.
\end{equation}
where $\es{M}=\e\es X$ is an unknown \m pn-matrix;
$$
\es M=\|M_1,M_2,\dots,M_n\|\in\ML,
$$
where $\ML$ is a submodule of
\s pn;
$$
\es{E}=\|E_1,E_2,\dots,E_n\|
$$
is a \m pn-matrix, the $p$-columns $E_1,E_2,\dots,E_n$ of which are the
independent $N_p(0,\Sigma)$ random variables.

The unknown parameter of this gaussian model is a pair $(\es M, \Sigma)$.
Let us find sufficient statistics for this parameter using
the factorization criterion.

A likelihood of the pair $(\es{M}, \Sigma)$ based on $\es X$ is
\begin{equation}\label{p2-32}
\begin{split}
\prod_{i=1}^n &\left( \frac{1}{\sqrt{2\pi}} \right)^p
\frac{1}{\sqrt{\det \Sigma}} \exp{ \{ -\frac12 (X_i-M_i)^T \Sigma^{-1}(X_i-M_i)
\} }=\\
=&\left( \frac{1}{\sqrt{2\pi}} \right)^{np}
\left(\frac{1}{\sqrt{\det \Sigma}}\right)^n
\exp{ \{ -\frac12 \mathop{\mathrm{tr}} \Sigma^{-1} [\sum_{i=1}^n (X_i-M_i)(X_i-M_i)^T] \} }.
\end{split}
\end{equation}

The sum in square brackets is $\p{\es X-\es{M}}{\es X-\es M}$.
Let us represent $\es X-\es M$ as
$$
\es X-\es M=(\x-\pr L X)+(\pr L X -\es M)=(1)+(2)
$$
and note that
$$
(1)=\pr {L^\perp}{X}\in\ML^\perp,\quad (2)\in\ML.
$$
Thus, (Pythagorean Theorem)
\begin{equation}\label{ss2}
\begin{split}
\p{\x-\es M}{\x-\es M}=\p{\pr {L^\perp}{X}}{\pr {L^\perp}{X}}+
\p{\pr L X-\es M}{\pr L X-\es M}.
\end{split}
\end{equation}
We conclude that the likelihood \eqref{p2-32} is expressed
through the statistics $\pr L X$ and $\p{\pr
{L^\perp}{X}}{\pr{L^\perp}{X}}$, which are sufficient for $\es M,
\Sigma$.

The statistic $\pr L X$ is obviously an unbiased estimate of $\es M$.
As a function of sufficient statistics it is the best unbiased
estimate of $\es M$.  We can show that the best unbiased estimate of
$\Sigma$ is the statistic
\begin{equation}\label{p2-34}
\frac{1}{\dim {\mathcal {L^\perp}}}
\p{\pr {L^\perp}{X}}{\pr {L^\perp}{X}}
\end{equation}
after proving the following theorem
\ref{t5}.

\subsection{Theorem of Orthogonal Decomposition}
\label{p24}

\begin{theorem}\label{t5}
Let $\x=\|X_1,X_2,\dots,X_n\|$ be a gaussian \m pn matrix with independent
$p$-columns $X_1,X_2,\dots,X_n\in\s p1$, and $\var X_i=\Sigma$ for all
$i=1,\dots,n$.
Let $\ML_1,\ML_2,\dots$ be pairwise orthogonal submodules \s pn,
the direct sum of which forms \s pn:
$$
\s pn=\ML_1\oplus \ML_2\oplus\dots
$$
Let us consider the decomposition of \m pn-matrix $\es X$ into the
sum of orthogonal projections $\es X$ on the submodules
$\mathcal{L}_1,\mathcal{L}_2,\dots$:
$$
\x=\prj{\ML_1}{X}+\prj{\ML_2}{X}+\dots
$$
Then:
\begin{enumerate}
\item[a)]
random \m pn-matrices
$\prj{\ML_1}{X},\prj{\ML_2}{X},\dots$
are independent, normally distributed, and
$\e\prj{\ML_i}{X}=\prj{\ML_i}{\e \x}$;
\item[b)] $\p{\prj{\ML_i}{X}}{\prj{\ML_i}{X}}=W_p(\dim
{L_i},\Sigma,\Delta_i)$,
where $W_p(\nu,\Sigma,\Delta)$ indicates a random matrix (of size $\m pp$),
distributed under Wishart, with $\nu$ degrees of freedom and the parameter
of non-centrality $\Delta$.
In this case
$$\Delta_i=\p{\prj{\ML_i}{\e X}}{\prj{\ML_i}{\e X}}.$$
\end{enumerate}
\end{theorem}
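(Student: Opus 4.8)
The plan is to reduce the multivariate statement to the scalar structure by passing to an orthonormal generating basis adapted to the decomposition, and then to read off both independence and the Wishart law directly from the coordinates. First I would choose, for each $i$, an orthogonal basis of $L_i$ inside $\Mr^1_n$; concatenating these gives an orthonormal basis $e_1,\ldots,e_n$ of $\Mr^1_n$ in which the first $\dim L_1$ rows span $L_1$, the next $\dim L_2$ rows span $L_2$, and so on. Let $C$ be the orthogonal $\m nn$-matrix whose rows are $e_1,\ldots,e_n$. By Theorem~\ref{th1} the matrix $\x$ has unique coordinates $\al_1,\ldots,\al_n\in\Mr^p_1$ in this basis (the columns of $A=\x C^T$, so that $\al_j=\x e_j^T$), and by Theorem~\ref{th4} the projection onto $\ML_i$ is right multiplication by the projection matrix onto $L_i$, which in this basis simply retains the block of coordinates belonging to $L_i$:
$$
\proj_{\ML_i}\x=\SL_{j\in B_i}\al_j e_j,
$$
where $B_i$ is the index set of the block spanning $L_i$ (here $e_j\Pi_i=e_j$ for $j\in B_i$ and $e_j\Pi_i=0$ otherwise, by orthogonality of the blocks).

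The second step is to pin down the joint law of the coordinates. Since $X_1,\ldots,X_n$ are independent with $X_j\sim N_p(\e X_j,\Sigma)$, the stacked family is jointly Gaussian, and each $\al_j=\x e_j^T$ is a linear combination of the $X_j$, so $(\al_1,\ldots,\al_n)$ is jointly Gaussian as well. By Lemma~\ref{lemma3} (equivalently, by applying Lemma~\ref{lemma2} to the orthogonal transformation $C$) the coordinates are uncorrelated and share the common covariance $\Sigma$, with means $\e\al_j$ equal to the coordinates of $\e\x$. For jointly Gaussian families, uncorrelatedness across blocks is the same as independence, so the coordinate groups $\{\al_j:j\in B_i\}$ are mutually independent across $i$, and within each group the $\al_j\sim N_p(\e\al_j,\Sigma)$ are independent. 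This already yields part~(a): each $\proj_{\ML_i}\x$ is a linear, hence Gaussian, function of the data; the projections depend on disjoint independent coordinate blocks and are therefore independent; and $\e\proj_{\ML_i}\x=\e(\x\Pi_i)=(\e\x)\Pi_i=\proj_{\ML_i}\e\x$ by linearity of projection (Theorem~\ref{th4}).

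For part~(b) I would compute the scalar square in the adapted basis by formula~\eqref{(3.5)}:
$$
\p{\proj_{\ML_i}\x}{\proj_{\ML_i}\x}=\SL_{j\in B_i}\al_j\al_j^T.
$$
This is a sum of $\dim L_i$ independent terms $\al_j\al_j^T$ with $\al_j\sim N_p(\e\al_j,\Sigma)$, which is precisely the definition of a (possibly non-central) Wishart matrix $W_p(\dim L_i,\Sigma,\Delta_i)$ with non-centrality $\Delta_i=\SL_{j\in B_i}(\e\al_j)(\e\al_j)^T$; applying \eqref{(3.5)} to $\e\x$ identifies this sum with $\p{\proj_{\ML_i}\e\x}{\proj_{\ML_i}\e\x}$, as claimed.

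The main obstacle is the passage from uncorrelatedness to genuine independence: Lemmas~\ref{lemma2} and~\ref{lemma3} only guarantee that the coordinates are uncorrelated with common covariance, and the conclusion that distinct projection blocks are mutually independent rests essentially on the joint normality of $(\al_1,\ldots,\al_n)$, which I would argue explicitly rather than treat as automatic. A secondary point of care is the non-centrality convention: the identity $\Delta_i=\p{\proj_{\ML_i}\e\x}{\proj_{\ML_i}\e\x}$ holds for the parametrization in which $\Delta=\sum_j(\e\al_j)(\e\al_j)^T$, so the definition of $W_p(\nu,\Sigma,\Delta)$ must be fixed to match.
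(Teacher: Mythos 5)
Your proposal is correct and follows essentially the same route as the paper: pass to an orthogonal generating basis adapted to the decomposition $L_1\oplus L_2\oplus\dots$, invoke the orthogonal-invariance lemma (Lemma~\ref{lemma3}) to get independent Gaussian coordinates with common covariance $\Sigma$, identify each projection with its block of coordinates, and read off the Wishart law of the scalar squares blockwise. Your explicit handling of the passage from uncorrelatedness to independence via joint normality, and your remark on fixing the non-centrality convention, are points the paper treats more briefly (the latter via the maximal-invariant argument showing the law depends on the means only through $\sum a_ia_i^T$), but the argument is the same.
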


\begin{proof}
Each submodule $\ML\subset\s pn$ has a one-to-one correspondence to
some linear subspace $L\subset\s 1n$ which generates it, and $\dim
\ML=\dim L$.  Let submodules $\ML_1,\ML_2,\dots\subset\s pn$
correspond to the subspaces $L_1, L_2,\dots\subset\s 1n$.  The
subspaces $L_1, L_2,\dots\subset\s 1n$ are pairwise orthogonal, and
their direct sum forms the entire space \s 1n.  Let us denote the
dimensions of submodules $\ML_1,\ML_2, \dots\subset\s pn$ (and
subspaces $L_1, L_2,\dots\subset\s 1n$) by $m_1,m_2,\dots$.

Let us choose in every subspace $L_1, L_2,\dots$  an orthogonal basis.
For $\ML_1$ let it be the $n$-rows $f_1,\dots,f_{m_1}$; for
$\ML_2$, the $n$-rows $f_{m_1+1},\dots, f_{m_1+m_2}$ etc.
With the help of these $n$-rows each of the submodules
$\ML_1,\ML_2,\ldots$ can be represented as the direct sum of
one dimensional submodules from \s pn.
For example, $\ML_1=\mathcal F_1\oplus\mathcal F_2\oplus\dots\oplus
\mathcal F_{m_1}$, where
\begin{align*}
&\mathcal F_1=\{ \y \mid \y=\alpha f_1, \ \alpha
\in\mathbb R^p_1
\} ,\\
&\mathcal F_2=\{ \y \mid \y=\alpha f_2, \ \alpha
\in\mathbb R^p_1
\} ,\\
&\cdots\\
&\mathcal F_{m_1}=\{ \y \mid \y=\alpha f_{m_1},\ \alpha
\in\mathbb R^p_1
\}.
\end{align*}
The set of all $n$-rows $f_1,f_2,\dots,f_n$ forms an orthogonal
basis in \s 1n and so does the generating basis in \s pn.
Therefore any \m pn-matrix $\x\in\s pn$ can be represented in the form
$$
\x=\sum_{i=1}^n Y_i f_i,
$$
where $Y_1,\dots,Y_n$ are some $p$-columns, that is
$Y_1,\dots,Y_n\in\s p1$, and
\begin{align*}
&\prj{\ML_1}{X}=\sum_{i=1}^{m_1} Y_i f_i,\\
&\prj{\ML_2}{X}=\sum_{i=m_1+1}^{m_2} Y_i f_i \qquad
\mbox{etc.}
\end{align*}
Here $p$-columns $Y_1,Y_2,\ldots,Y_n$ are coordinates of a \m pn-matrix \x{}
relative to the generating basis
$f_1,\dots,f_n$, while
the $p$-columns $X_1,X_2,\dots$, $X_n$ are coordinates of the same
\m pn-matrix \x{} relative to the orthogonal canonical basis \s 1n:
$e_1=(1,0,\dots)$, $e_2=(0,1,0,\dots)$
etc.
As was noted earlier (see Lemma \ref{lemma3}), the transformation from
some coordinates to others is performed through the right
multiplication of an \m pn-matrix \x{} by some orthogonal transformation \m nn-matrix, say by \m nn-matrix
$C$:
$$
\|Y_1,Y_2,\dots,Y_n\|=\|X_1,X_2,\dots,X_n\| C,
\quad\mbox{or}\quad \y=\x C.
$$
Thus the $p$-columns $Y_1,\dots,Y_n$ are mutually normally distributed.
Following  Lemma \ref{lemma3},
$$
\var \es Y = \var \es X=\{\delta_{ij}\Sigma\mid i,j=\overline{1,n}\}.
$$
This means that $Y_1,\dots,Y_n$ are independent gaussian $p$-columns with common covariance matrix $\Sigma$, just like
the $p$-columns $X_1,\dots,X_n$.

Let us consider random \m pp-matrices
$$
\p{\prj{\ML_1}{X}}{\prj{\ML_1}{X}}, \p{\prj{\mathcal
L_2}{X}}{\prj{\ML_2}{X}},\dots
$$
For example,
$$
\p{\prj{\ML_1}{X}}{\prj{\ML_1}{X}}=\sum_{i=1}^{m_1}
Y_i Y_i^T.
$$
The distribution of such random matrices is called a Wishart distribution.
If $\e Y_1=\e Y_2=\dots=\e Y_{m_1}=0$, we get the so-called central
Wishart distribution $W_p(m_1,\Sigma)$.
Let us note that if one uses the notation $W_p(m,\Sigma)$ for
a random matrix itself, not only for its distribution, then
one can say that
$$
W_p(m,\Sigma)=\Sigma^{\frac 12} W_p(m,I) \Sigma^{\frac 12},
$$
if one represents as $\Sigma^{\frac 12}$ a symmetric matrix,
the unique symmetric solution of the matrix equation:
$Z^2=\Sigma$.

One says that a random \m pp-matrix $W$ has the noncentral
Wishart distribution if
$$
W=\sum_{i=1}^m(\xi_i+a_i)(\xi_i+a_i)^T,
$$
where the $p$-columns $\xi_1,\xi_2,\ldots,\xi_m$ are iid
$N_p(0,\Sigma)$, $a_1,a_2,\dots,a_m$ are some nonrandom $p$-columns, 
generally distinct from zero.
The distribution $W$ somehow depends on the $p$-columns
$a_1,a_2,\dots,a_m$. Let us show that the distribution $W$ depends
on the noted $p$-columns through a so-called parameter of
noncentrality: the \m pp-matrix
$$
\Delta=\sum_{i=1}^m a_i a_i^T.
$$
Let us introduce the \m pm-matrices
\begin{align*}
{\xi}&=\|\xi_1,\xi_2,\dots,\xi_m\|,\\
\es{A}&=\|a_1,a_2,\dots,a_m\|.
\end{align*}
In these notations
$$
W=\p{\mathbf{\xi}+\es A}{\mathbf{\xi}+\es A}.
$$

Let $C$ be an arbitrary orthogonal \m mm-matrix.
Say
$\eta =\xi C$.
Note that $\eta\stackrel{\mathrm{d}}{=}\xi$, and
$$
W\stackrel{\mathrm{d}}{=}\p{\eta+A C}{\eta+ A C}.
$$
We see that the noncentral Wishart distribution  depends on
$\es{A}=\|a_1,\dots,a_m\|$ not directly but through the maximal
invariant $\es{A}$ under orthogonal transformations, that is through
$\p {\es A}{\es A}=\sum_{i=1}^m a_i a_i^T$.

Therefore, in the general case
$$
\p{\prj{\ML_i}{X}}{\prj{\mathcal
L_i}{X}}=W_p(m_i,\Sigma,\Delta_i),
$$
where $\Delta_i=\p{\prj{\ML_i}{\e X}}{\prj{\ML_i}{\e X}}$.
\end{proof}

Let us return to the unbiased estimate of parameter $\Sigma$ of linear
models.
In  linear model \eqref{p2-31} $\pr{L^\perp}{\e X}=0$.
Therefore the statistic \eqref{p2-34} is
$$
\frac{1}{\dim \ML^\bot}\, \p{\prj{\ML^\bot}{X}}{\prj{\mathcal
L^\bot}{X}}=
\frac{1}{n-m} \Sigma^{\frac12} W_p(n-m,I) \Sigma^{\frac12}.
$$
It is obvious that its expected value is $\Sigma$.

\subsection{Testing Linear Hypotheses}
\label{p25}

Copying the univariate linear model, we shall define the
hypothesis in the multivariate linear model \eqref{p2-31} as
\begin{equation}\label{w1}
H: \e\x\in\ML_1,
\end{equation}
where $\ML_1$ is a given submodule such that
$\ML_1\subset \ML$.

In this section we will propose statistics which may serve as the
base for the construction of statistical criteria for testing $H$
\eqref{w1}, free (under $H$) from the parameters $\es M$,
$\Sigma$.

Let us introduce the submodule $\ML_2$ which is an orthogonal
complement $\ML_1$ with respect to $\ML$:
\begin{equation}\label{w2}
\ML=\ML_1\oplus\ML_2.
\end{equation}
Let us consider the decomposition of the space \s pn into three
pairwise orthogonal subspaces:
$$
\s pn=\ML_1\oplus\ML_2\oplus\ML^\perp.
$$
Following theorem~\ref{t5} the random matrices
$$
S_1:=\p{\prj{\ML^\perp}{X}}{\prj{\ML^\perp}{X}}
\qquad \mbox{and}\qquad S_2:=\p{\prj{\ML_2}{X}}{\prj{\mathcal
L_2}{X}}
$$
are  independent and have Wishart distributions.
Regardless of $H$
\begin{equation}\label{w3}
S_1=\p{\prj{\ML^\perp}{X}}{\prj{\mathcal
L^\perp}{X}}=W_p(n-m,\Sigma).
\end{equation}

If the hypothesis $H$ \eqref{w1} is true, then
\begin{equation}\label{w4}
S_2=\p{\prj{\ML_2}{X}}{\prj{\mathcal
L_2}{X}}=W_p(m_2,\Sigma).
\end{equation}
(Here and further we denote $m=\dim\ML$,
$m_1=\dim\ML_1$, $m_2=\dim\ML_2$).

Under the alternative to $H$ \eqref{w1}, the Wishart distribution of statistic
\eqref{w4} becomes noncentral with the parameter of noncentrality
$$
\Delta=\p{\prj{\ML_2}{\e\x}}{\prj{\ML_2}{\e\x}}.
$$
The noncentrality parameter shows
the degree of violation of the hypothesis $H$
\eqref{w1}: $\e\x\in\ML_1$.

In the one-dimensional case (when $p=1$) the statistics \eqref{w3} and
\eqref{w4} turn into random variables distributed as
$\sigma^2 \chi^2(n-m)$ and $\sigma^2 \chi^2(m_2)$ respectively. Their ratio (under
the hypothesis) is distributed free, and therefore it can be used
as a statistical criterion for testing $H$.
This is the well-known F-ratio of Fischer.

In the multivariate case the analogue of F-ratio should be the
``ratio'' of \m pp-matrices $S_2$ and $S_1$.
Under $n-m\ge p$ the matrix $S_1$ \eqref{w3} is non-degenerate, and
therefore there exists a statistic ($\m pp$-matrix)
\begin{equation}\label{w5}
\p{\prj{\ML_2}{X}}{\prj{\ML_2}{X}}\ \p{\prj{\mathcal
L^\perp}{X}}{\prj{\ML^\perp}{X}} ^{-1}
\end{equation}

Unlike the one-dimensional case ($p=1$) the statistic \eqref{w5} is
not distributed free. By distribution, \eqref{w5} is equal to
\begin{equation}\label{w6}
\Sigma^{\frac12}\ W_p(m_2,I)\ W_p^{-1}(n-m,I)\ \Sigma^{-\frac12}.
\end{equation}

However the eigenvalues of matrix \eqref{w5} under the hypothesis $H$
\eqref{w1} are distributed free (from $\es M$, $\Sigma$).
These eigenvalues coincide with the roots of the equation relative to
$\lambda$
\begin{equation}\label{w7}
\det (W_p(m_2,I)-\lambda W_p(n-m,I))=0.
\end{equation}
Therefore certain functions of the roots of equation \eqref{w7}
are traditionally used as critical statistics in testing linear
hypotheses.

Here our investigation enters the traditional realm of multivariate
statistical analysis, and therefore must be finished.

I thank E. Sukhanova, A. Sarantseva, and P. Panov for discussions and
assistance.

The research is supported by RFBR, project 06-01-00454.

\newpage
\addcontentsline{toc}{section}{Reference}

\end{document}